\title{Flatness and Completion Revisited}
\date{6 September 2017}
\author{Amnon Yekutieli}
\address{Department of  Mathematics,
Ben Gurion University, Be'er Sheva 84105, Israel}
\email{amyekut@math.bgu.ac.il}
\thanks{{\em Mathematics Subject Classification} 2010.
Primary 13J10; Secondary 13B35, 13E05, 13C10, 13C11, 13D07.}
\keywords{adic completion, adic system, flat module, noetherian ring, weakly 
proregular ideal.}
\newtheorem{thm}[equation]{Theorem}
\newtheorem{cor}[equation]{Corollary}
\newtheorem{prop}[equation]{Proposition}
\newtheorem{lem}[equation]{Lemma}
\theoremstyle{definition}
\newtheorem{dfn}[equation]{Definition}
\newtheorem{rem}[equation]{Remark}
\newtheorem{exa}[equation]{Example}
\newtheorem{que}[equation]{Question}
\numberwithin{equation}{section}
\newcommand{\xar}{\xrightarrow}
\newcommand{\opn}{\operatorname}
\newcommand{\cat}[1]{\operatorname{\mathsf{#1}}}
\newcommand{\cd}{\,{\cdot}\,}
\newcommand{\rmitem}[1]{\item[\text{\textup{(#1)}}]}
\newcommand{\mfrak}[1]{\mathfrak{#1}}
\newcommand{\mrm}[1]{\mathrm{#1}}
\newcommand{\Ga}{\Gamma}
\newcommand{\La}{\Lambda}
\newcommand{\de}{\delta}
\newcommand{\Om}{\Omega}
\renewcommand{\a}{\mfrak{a}}
\renewcommand{\b}{\mfrak{b}}
\newcommand{\K}{\mathbb{K}}
\newcommand{\Z}{\mathbb{Z}}
\newcommand{\N}{\mathbb{N}}
\newcommand{\tup}[1]{\textup{#1}}
\newcommand{\bsym}[1]{\boldsymbol{#1}}
\newcommand{\ot}{\otimes}
\newcommand{\what}[1]{\widehat{#1}}
\newcommand{\mhat}[1]{\skew{3.0}{\what}{#1}}
\renewcommand{\d}{\mathrm{d}}
\newcommand{\lb}{\linebreak}
\begin{document}

\begin{abstract}
We continue investigating the interaction between flatness 
and $\a$-adic completion for infinitely generated $A$-modules.
Here $A$ is a commutative ring and $\a$ is a finitely generated ideal in it. We 
introduce the concept of {\em $\a$-adic flatness}, which is weaker 
than flatness. We prove that $\a$-adic flatness is preserved under completion 
when the ideal $\a$ is {\em weakly proregular}. We also prove that when $A$ is 
noetherian, $\a$-adic flatness coincides with flatness (for complete modules). 
An example is worked out of a non-noetherian ring $A$, with a  weakly 
proregular ideal $\a$, for which the completion $\what{A}$ is not flat. We also 
study {\em $\a$-adic systems}, and prove that if the ideal $\a$ is finitely 
generated, then the limit of every $\a$-adic system is a complete module.
\end{abstract}

\maketitle
\tableofcontents

\setcounter{section}{-1}
\section{Introduction}

In this paper we continue investigating the interaction between flatness 
and adic completion for infinitely generated modules over a commutative 
ring, that we had started in \cite{Ye1}. 

Let $A$ be a commutative ring, and let $\a$ be an ideal in it. For each $k \in 
\N$ we define the quotient ring
$A_k := A / \a^{k + 1}$. 
The collection of rings $\{ A_k \}_{k \in \N}$ is an inverse system,
and the $\a$-adic completion of $A$ is the commutative ring 
$\what{A} := \lim_{\leftarrow k} \, A_k$. 
There is a canonical ring homomorphism $A \to \what{A}$. 

An {\em $\a$-adic system} of $A$-modules is an inverse system
$\{ M_k \}_{k \in \N}$, where each $M_k$ is an $A_k$-module, and the induced 
homomorphisms 
\[ A_k \ot_{A_{k + 1}} M_{k + 1} \to M_k \]
are all bijective. The {\em limit} of the system is the $A$-module 
$\what{M} := \lim_{\leftarrow k} \, M_k$.

Given an $A$-module $M$, we let 
$M_k := A_k \ot_A M$.
The collection of $A$-modules $\{ M_k \}_{k \in \N}$ is an inverse system,
that we call the {\em $\a$-adic system induced by $M$}. The limit $\what{M}$ of 
the induced system is the {\em $\a$-adic completion of $M$}.
There is a canonical $A$-module homomorphism 
$\tau_M : M \to \what{M}$, and $M$ is called {\em $\a$-adically complete} if 
$\tau_M$ is an isomorphism. (Some texts, mostly older ones, would say that $M$ 
is complete and separated.)

Our initial motivation was to prove the following theorem, that we consider 
important.

\begin{thm} \label{thm:165}
If $A$ is a noetherian commutative ring, $\a$ is an ideal in $A$, and $M$ is a 
flat $A$-module, then the $\a$-adic completion $\what{M}$ is a flat $A$-module.
\end{thm}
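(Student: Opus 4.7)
The plan is to route the argument through the notion of \emph{$\a$-adic flatness} alluded to in the abstract, which is strictly weaker than flatness but much better behaved under completion of infinitely generated modules. The two key intermediate results I will invoke are: \tup{(i)} when $\a$ is weakly proregular, the $\a$-adic completion of any $\a$-adically flat module is again $\a$-adically flat; and \tup{(ii)} when $A$ is noetherian, any $\a$-adically complete $A$-module that is $\a$-adically flat is in fact flat. Together, these reduce the theorem to a short chain of implications.

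The argument then proceeds in three steps. First, I would verify that flatness of $M$ implies $\a$-adic flatness of $M$; this ought to be essentially tautological from the definition, since $M_k = A_k \ot_A M$ is automatically flat over $A_k$ whenever $M$ is flat over $A$, and $\a$-adic flatness is a condition on the induced adic system $\{ M_k \}_{k \in \N}$. Second, because $\a$ is finitely generated in the noetherian ring $A$, it is automatically weakly proregular, so applying \tup{(i)} to $M$ yields that $\what{M}$ is $\a$-adically flat. Third, since $\a$ is finitely generated, the limit $\what{M}$ of the induced adic system is $\a$-adically complete (as stated in the abstract for limits of $\a$-adic systems with $\a$ finitely generated), so \tup{(ii)} promotes the $\a$-adic flatness of $\what{M}$ to genuine flatness.

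The main obstacle is not the reduction sketched above — which is a fairly formal assembly — but the proof of ingredient \tup{(i)}, the preservation of $\a$-adic flatness under completion for weakly proregular ideals. This is where the homological subtleties specific to infinitely generated modules enter: one has to control the interaction between the inverse limit defining $\what{M}$ and tensor products with arbitrary $A$-modules, and weak proregularity of $\a$ is precisely the Koszul-cohomological hypothesis that makes the naive and derived completion functors agree well enough to transport flatness across the limit. Ingredient \tup{(ii)} should by contrast be a more local statement, proved by descending flatness of the $M_k$ over $A_k$ back to $A$ using noetherian tools such as Artin--Rees and a Mittag-Leffler style argument on Tor.
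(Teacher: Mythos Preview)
Your route is logically valid but differs from the paper's own proof. The paper does not prove Theorem~\ref{thm:165} by passing through $\a$-adic flatness at all; it deduces it directly as the special case of Theorem~\ref{thm:262}(2) (= Theorem~\ref{thm:295}): the flat module $M$ induces a flat $\a$-adic system $\{M_k\}$, and that theorem says the limit of any flat $\a$-adic system over a noetherian ring is flat. The proof constructs a free resolution of the system (Section~\ref{sec:res-adic-sys}), passes to the limit to obtain a resolution of $\what{M}$ by $\a$-adically free modules, and verifies Tor-vanishing against finitely generated $N$ using Lemma~\ref{lem:295} on modules of decaying functions together with a Mittag-Leffler argument. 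Neither Theorem~\ref{thm:280} nor the MGM equivalence enters.

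Your assessment of where the difficulty lies is also inverted relative to the paper. Ingredient (i) (= Theorem~\ref{thm:200}) is handled in Section~\ref{sec:WPR} via the MGM equivalence from \cite{PSY1}, which is established machinery. Ingredient (ii) (= Corollary~\ref{cor:320}), which you expect to be the easier ``local'' step, is in the paper a \emph{corollary of} Theorem~\ref{thm:295}---i.e.\ of the very result that is the paper's direct proof of Theorem~\ref{thm:165}. So as the paper organizes things, your route is strictly longer: it requires Theorem~\ref{thm:200} \emph{and} Theorem~\ref{thm:295}. Your intuition that (ii) should follow from Mittag-Leffler and noetherian tools is roughly right in spirit---Theorem~\ref{thm:295} does argue along those lines---but the actual technical input (free resolutions of $\a$-adic systems, properties of the modules $\opn{F}_{\mrm{dec}}(Z, -)$) is more substantial than ``Artin--Rees'' suggests.
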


Of course the module $M$ has to be {\em infinitely generated} for this 
statement to be interesting.  

We were under the impression that this was an open question, solved only in 
special cases (cf.\ \cite{En}, \cite{BS},  \cite{Sct}, and
\cite[Theorems 4.2 and 3.4(2)]{Ye1}). This impression was 
based on a literature search and email correspondence with several experts. 
A few months ago we found a relatively simple proof of this result (it is now 
the proof of Theorem \ref{thm:262}(2) below). 

However, after email correspondence with a few more experts, to whom we showed 
a preliminary version of this paper, we learned that:
\begin{enumerate}
\item There is already a proof of Theorem \ref{thm:165}. It is  
\cite[Lemma 0AGW]{SP}, presumably due to de Jong, from around 2013. 

\item There is an elementary proof of Theorem \ref{thm:165}, 
indicated to us by Gabber and Ramero, along the lines of \cite[Lemma 7.1.6]{GR}.
\end{enumerate}

Thus we turned our attention to a few theorems of secondary importance, that 
are totally new, and to which our methods could be applied. This is the content 
of the present paper.

Before going on, let us make several general remarks about $\a$-adic 
completion and $\a$-torsion, that are not widely known. For an $A$-module $M$, 
let us write $\La_{\a}(M) := \what{M}$, and let $\Ga_{\a}(M)$ be the 
$\a$-torsion submodule of $M$. So $\La_{\a}$ and $\Ga_{\a}$ are $A$-linear 
functors from the category $\cat{Mod} A$ of $A$-modules to itself. 

If the ideal $\a$ is finitely generated, then for every $A$-module $M$ its 
$\a$-adic completion $\what{M}$ is $\a$-adically complete. 
Thus the functor $\La_{\a}$ is idempotent, in the sense that 
$\La_{\a} \circ \La_{\a} \cong \La_{\a}$.
There are counterexamples to this idempotence when the ideal $\a$ is not 
finitely generated. See \cite[Corollary 3.6 and Example 1.8]{Ye1}.

Even when the ring $A$ is noetherian (which of course forces the ideal $\a$ to 
be finitely generated), the functor $\La_{\a}$ is not left-exact nor 
right-exact. It is true that when $A$ is noetherian, the functor $\La_{\a}$ is 
exact when restricted to the category $\cat{Mod}_{\mrm{f}} A$ of finitely 
generated $A$-modules.

The functor $\Ga_{\a}$ is always right exact and idempotent. 

The first main result of our paper is on $\a$-adic systems. It is a vast 
generalization of the classical case, in which $A$ is noetherian and each $M_k$ 
is a finitely generated $A_k$-module. 

\begin{thm} \label{thm:180}
Let $A$ be a commutative ring, let $\a$ be a finitely generated ideal in $A$, 
and let $\{ M_k \}_{k \in \N}$ be an $\a$-adic system of 
$A$-modules, with limit $\what{M}$. Then\tup{:}
\begin{enumerate}
\item The $A$-module $\what{M}$ is $\a$-adically complete.

\item For every $k \geq 0$ the canonical homomorphism 
$A_k \ot_A \what{M} \to M_k$ is bijective.
\end{enumerate}
\end{thm}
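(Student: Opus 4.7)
My plan is to prove part (2) first; part (1) then follows formally. Once we have $\what{M}/\a^{k+1}\what{M} \cong M_k$ compatibly in $k$, passing to the inverse limit gives $\what{M} \cong \lim_k \what{M}/\a^{k+1}\what{M}$, which is the $\a$-adic completeness of $\what{M}$. Surjectivity of $A_k \otimes_A \what{M} \to M_k$ reduces to surjectivity of $\what{M} \to M_k$: given $x_k \in M_k$, the adic-system condition $A_i \otimes_{A_{i+1}} M_{i+1} \iso M_i$ permits successive liftings to $x_{k+1}, x_{k+2}, \ldots$, assembling a compatible sequence in $\what{M}$ projecting to $x_k$. The essential content is thus the reverse inclusion $\ker(\what{M} \to M_k) \subseteq \a^{k+1}\what{M}$.

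I would prove this by induction on $k$. Writing $\a = (a_1, \ldots, a_n)$ and given $y = (y_i)_i \in \what{M}$ with $y_k = 0$ (so $y_i \in \a^{k+1} M_i$ for $i \geq k$), the goal is to construct $z_1, \ldots, z_n \in \ker(\what{M} \to M_{k-1})$ with $y = \sum_j a_j z_j$. The inductive hypothesis $\ker(\what{M} \to M_{k-1}) = \a^k\what{M}$ then places each $z_j$ in $\a^k\what{M}$, yielding $y \in \a^{k+1}\what{M}$. The construction itself does not invoke the inductive hypothesis, so the base case $k = 0$ (where the kernel condition on $z_j$ is vacuous) is handled by the same procedure and directly produces $y \in \a\what{M}$.

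I would build each $z_j = (z_{j,i})_i$ level by level, setting $z_{j,i} = 0$ for $i \leq k-1$ and requiring $\sum_j a_j z_{j,i} = y_i$ in $M_i$. The critical mechanism, using that $M_{i+1}$ is annihilated by $\a^{i+2}$, is that for $z_{j,i} \in M_i$ the expression $\phi_i(z_{\cdot,i}) := \sum_j a_j \tilde z_{j,i} \in M_{i+1}$ is independent of the choice of lift $\tilde z_{j,i}$. Consequently the level-$(i{+}1)$ equation $\sum_j a_j z_{j,i+1} = y_{i+1}$ becomes a constraint on $z_{j,i}$ alone, namely $\phi_i(z_{\cdot,i}) = y_{i+1}$. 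A direct computation shows that modifying $z_{j,i}$ within $\a^i M_i$---which preserves both compatibility with $z_{j,i-1}$ and the sum equation at level $i$---moves $\phi_i(z_{\cdot,i})$ through the full submodule $\a^{i+1} M_{i+1}$, precisely matching the discrepancy $y_{i+1} - \phi_i(z_{\cdot,i}) \in \ker(M_{i+1} \to M_i) = \a^{i+1} M_{i+1}$.

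The main technical obstacle is to assemble these level-by-level adjustments into a single coherent element of $\what{M}^n$. I would apply a Mittag-Leffler argument to the inverse system $\{T_i\}$ of partial solutions truncated at level $i$: the analysis above shows that the subset $T_i^* \subseteq T_i$ consisting of partial solutions extending arbitrarily far is non-empty and that the transition maps $T_{i+1}^* \to T_i^*$ are surjective, so $\lim T_i^*$ is non-empty and supplies the required $(z_j)$. Finite generation of $\a$ is essential throughout: it keeps the index set for the generators finite, and it is what ensures the good behavior of the $\phi_i$.
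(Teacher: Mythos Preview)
Your argument is correct and takes a genuinely different route from the paper's. The paper proves (1) first by a splitting trick: composing the canonical maps
\[
\what{M} \xrightarrow{\ \tau\ } \mhat{\what{M}} \xrightarrow{\ \phi\ } \what{M}
\]
(where $\mhat{\what{M}}$ is the $\a$-adic completion of $\what{M}$) gives the identity, so $\what{M}$ is a direct summand of the complete module $\mhat{\what{M}}$, hence complete; this forces $\phi$ to be bijective. Part (2) then follows by a Mittag-Leffler argument on the kernels $L_k := \ker\bigl(A_k \ot_A \what{M} \to M_k\bigr)$: the transitions $L_{k+1} \to L_k$ are surjective, the limit is $\ker(\phi) = 0$, and surjectivity of $\what{L} \to L_k$ forces $L_k = 0$. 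This is short and conceptual, but it relies on the cited fact \cite[Corollary~3.6]{Ye1} that the $\a$-adic completion of any module is complete when $\a$ is finitely generated.

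Your approach inverts the logic: you establish (2) directly by an explicit element-level construction, and (1) is then a formal consequence. The advantage is that your proof is self-contained --- in effect you re-prove the needed instance of \cite[Corollary~3.6]{Ye1} along the way. The key mechanism you identify (that $\sum_j a_j \tilde{z}_{j,i}$ depends only on $z_{j,i}$, not on the lift, and that adjusting $z_{j,i}$ within $\a^i M_i$ moves this quantity through all of $\a^{i+1} M_{i+1}$) is exactly right. One small simplification: your observation actually shows that the images $\opn{Im}(T_{i+1} \to T_i)$ already equal the eventual images $T_i^*$, so rather than invoking Mittag-Leffler on sets you can run a direct recursion --- at each step choose $z_{j,i}$ lifting $z_{j,i-1}$ and additionally satisfying $\phi_i(z_{\cdot,i}) = y_{i+1}$, which you have shown is always possible.
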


This theorem is repeated as Theorem \ref{thm:230} in Section \ref{sec:adic-sys} 
and proved there. An immediate consequence of Theorem \ref{thm:180} is that 
when the ideal $\a$ is finitely generated, every $\a$-adic system 
$\{ M_k \}_{k \in \N}$ is induced from a module.
See Corollaries \ref{cor:333} and \ref{cor:263}.

Now we turn to flatness. 
An $A$-module $M$ is said to be {\em $\a$-adically 
flat} if \lb $\opn{Tor}^A_i(N, M) = 0$ for every $i > 0$ and every 
$\a$-torsion $A$-module $N$. Clearly flatness implies $\a$-adic flatness.  

Here is a useful characterization of $\a$-adically flat modules. 

\begin{thm} \label{thm:261}
Let $A$ be a commutative ring, let $\a$ be an ideal in $A$, and 
let $M$ be an $A$-module. The following conditions are equivalent\tup{:}
\begin{enumerate}
\rmitem{i} The $A$-module $M$ is $\a$-adically flat.

\rmitem{ii} For every $i > 0$ and $k \geq 0$ the module 
$\opn{Tor}^{A}_i(A_{k}, M)$ vanishes, and   
$A_{k} \ot_A M$ is a flat $A_k$-module.  

\rmitem{iii} For every $i > 0$ the module 
$\opn{Tor}^{A}_i(A_{0}, M)$ vanishes, and   
$A_{0} \ot_A M$ is a flat $A_0$-module. 
\end{enumerate}
\end{thm}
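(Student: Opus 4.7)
The plan is to establish the cycle (i) $\Rightarrow$ (ii) $\Rightarrow$ (iii) $\Rightarrow$ (i); the middle implication is the trivial specialization $k = 0$, and the other two rest on the same piece of homological algebra. Namely, for any $k$ and any $A_k$-module $N$, the change-of-rings spectral sequence
\[ E^2_{p,q} = \opn{Tor}^{A_k}_p \bigl( N, \opn{Tor}^A_q(A_k, M) \bigr) \Rightarrow \opn{Tor}^A_{p+q}(N, M) \]
collapses to natural isomorphisms $\opn{Tor}^{A_k}_i(N, A_k \ot_A M) \iso \opn{Tor}^A_i(N, M)$ whenever $\opn{Tor}^A_q(A_k, M) = 0$ for all $q > 0$.

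For (i) $\Rightarrow$ (ii): each $A_k$ is annihilated by $\a^{k+1}$, hence $\a$-torsion, so (i) immediately gives $\opn{Tor}^A_i(A_k, M) = 0$ for $i > 0$. For flatness of $A_k \ot_A M$ over $A_k$, I would let $N$ be any $A_k$-module, observe that $N$ is also $\a$-torsion as an $A$-module, apply (i) to conclude that $\opn{Tor}^A_i(N, M) = 0$ for $i > 0$, and read off $\opn{Tor}^{A_k}_i(N, A_k \ot_A M) = 0$ from the collapsed spectral sequence.

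The main direction is (iii) $\Rightarrow$ (i). First, the spectral sequence applied with $k = 0$ upgrades (iii) to the stronger statement that $\opn{Tor}^A_i(N', M) = 0$ for every $A_0$-module $N'$ and every $i > 0$. Next, given a general $\a$-torsion module $N$, I would write $N$ as the filtered colimit of its submodules $N^{[k]} := \{ x \in N \mid \a^{k+1} \cd x = 0 \}$, and invoke the fact that Tor commutes with filtered colimits to reduce to the case $\a^{k+1} N = 0$. For such $N$ the finite descending filtration
\[ N \supseteq \a N \supseteq \a^2 N \supseteq \cdots \supseteq \a^{k+1} N = 0 \]
has successive quotients $\a^j N / \a^{j+1} N$ annihilated by $\a$, hence $A_0$-modules, so these quotients have vanishing higher Tor against $M$ by the first step. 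A descending induction on $j$, using the long exact Tor sequences attached to $0 \to \a^{j+1} N \to \a^j N \to \a^j N / \a^{j+1} N \to 0$, then propagates the vanishing upward to $\opn{Tor}^A_i(N, M) = 0$ for all $i > 0$, giving (i). The only real point of care in the whole argument is the collapse of the change-of-rings spectral sequence, which is completely standard once $\opn{Tor}^A_q(A_k, M) = 0$ for $q > 0$; beyond that, everything reduces mechanically to the base case $A_0$.
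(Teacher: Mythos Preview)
Your proof is correct and follows essentially the same strategy as the paper's. The paper phrases everything in derived-category language---the collapse of your change-of-rings spectral sequence is exactly the isomorphism $N \ot_{A_k}^{\mrm{L}} (A_k \ot_A^{\mrm{L}} M) \cong N \ot_A^{\mrm{L}} M$ coming from associativity of derived tensor products---and organizes the cycle slightly differently, proving (iii) $\Rightarrow$ (ii) by induction on $k$ (splitting an $A_{k+1}$-module $N$ as $0 \to \a N \to N \to N/\a N \to 0$ with both ends $A_k$-modules) rather than your direct (iii) $\Rightarrow$ (i) via the full $\a$-power filtration; but this is the same d\'evissage unrolled differently. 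It is perhaps worth noting that the paper remarks in its introduction that a classical Tor/spectral-sequence proof ``would very likely be much longer and more difficult to understand''; your version shows that this is not really so.
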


This is repeated as Theorem \ref{thm:168} in the body of the paper. 
Observe that there are no finiteness conditions on $A$, $\a$ or $M$. 
The result is similar to \cite[Lemma 051C]{SP} -- see Remark \ref{rem:310} 
for a comparison. The proof of Theorem \ref{thm:261} relies on some basic 
properties of the derived tensor product. 

Suppose $\bsym{a} = (a_1, \ldots, a_n)$ is a finite sequence of elements in 
$A$. The sequence $\bsym{a}$ is called {\em weakly proregular} if it satisfies 
a 
rather complicated condition, involving the Koszul complexes associated to 
powers of $\bsym{a}$. The definition is recalled in Section 
\ref{sec:WPR}. The ideal $\a$ is called weakly proregular if it is 
generated by some weakly proregular sequence. It is known that when $A$ 
is noetherian, every ideal in it is weakly proregular.
But there are fairly natural examples of weakly proregular ideals 
in non-noetherian rings (see Theorem \ref{thm:205} for such an example). 
It is now understood (see \cite{PSY1} and \cite{PSY2}) that weak proregularity 
of the ideal $\a$ is a necessary and sufficient condition for the derived 
functors $\mrm{L} \La_{\a}$ and $\mrm{R} \Ga_{\a}$ to have ``good behavior''.

As the next results show, adic flatness belongs to the ``twilight 
zone'' between the weakly proregular case and the noetherian case. 
Adic flatness comes up in the weakly proregular situation:

\begin{thm} \label{thm:280}
Let $A$ be a commutative ring, let $\a$ be a weakly proregular ideal in $A$, 
and let $M$ be an $\a$-adically flat $A$-module, with $\a$-adic completion 
$\what{M}$. Then the $A$-module $\what{M}$ is $\a$-adically flat. 
\end{thm}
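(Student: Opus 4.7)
The plan is to verify the criterion of Theorem \ref{thm:261}(iii) for $\what{M}$: namely that $A_0 \ot_A \what{M}$ is $A_0$-flat and that $\opn{Tor}^A_i(A_0, \what{M}) = 0$ for every $i > 0$. The first condition is immediate from Theorem \ref{thm:180}(2), which provides a canonical isomorphism $A_0 \ot_A \what{M} \cong A_0 \ot_A M = M_0$; the right-hand side is $A_0$-flat by Theorem \ref{thm:261}(i)$\Rightarrow$(ii) applied to $M$.

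For the Tor vanishing, I would choose a free resolution $P_\bdot \to M$ over $A$. The $\a$-adic flatness of $M$, together with Theorem \ref{thm:261}(ii), ensures that for every $k \geq 0$ the complex $A_k \ot_A P_\bdot \to M_k$ is a free $A_k$-resolution of $M_k$. Because the inverse system of complexes $\{A_k \ot_A P_\bdot\}_k$ has termwise surjective transition maps, Mittag-Leffler holds trivially and the Milnor $\lim^1$ sequence for homology of the inverse limit collapses, showing that $\La_\a(P_\bdot)$ is a resolution of $\what{M}$ in $\cat{Mod} A$. Moreover, for any $\a$-torsion $A$-module $N$, writing $N$ as the directed union of its submodules annihilated by powers of $\a$ and applying Theorem \ref{thm:180}(2) degreewise to the systems whose limits are the $\what{P_j}$, one obtains a natural isomorphism of complexes $N \ot_A \La_\a(P_\bdot) \cong N \ot_A P_\bdot$. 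Its homology in positive degrees is $\opn{Tor}^A_i(N, M)$, which vanishes by the $\a$-adic flatness of $M$.

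The crux, and the only step where weak proregularity of $\a$ enters, is to identify $H_i(A_0 \ot_A \La_\a(P_\bdot))$ with $\opn{Tor}^A_i(A_0, \what{M})$. This requires each $\what{P_j}$ to be $\a$-adically flat -- in other words, it requires the ``free case'' of the very theorem being proved -- and this is where I expect the main obstacle to lie. I would isolate the free case as a preliminary lemma, proved using the standard consequences of weak proregularity developed in \cite{PSY1, PSY2}: namely, that the derived completion $\mrm{L}\La_\a$ coincides with $\La_\a$ on flat modules, together with the MGM/Greenlees--May identity $N \ot_A^{\mrm{L}} X \iso N \ot_A^{\mrm{L}} \mrm{L}\La_\a X$ for every $\a$-torsion $N$. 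Applied to a free $A$-module $P_j$, this chain of isomorphisms forces $\opn{Tor}^A_i(N, \what{P_j}) \cong \opn{Tor}^A_i(N, P_j) = 0$ for $i > 0$. With the free case established, $\La_\a(P_\bdot)$ is a resolution of $\what{M}$ whose terms are Tor-acyclic against $\a$-torsion modules, and the desired vanishing $\opn{Tor}^A_i(A_0, \what{M}) = H_i(A_0 \ot_A \La_\a(P_\bdot)) \cong H_i(A_0 \ot_A P_\bdot) = \opn{Tor}^A_i(A_0, M) = 0$ follows, completing the verification.
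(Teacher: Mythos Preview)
Your argument is correct, and at its core it uses the same MGM input as the paper, but the organization is different and somewhat longer than necessary.

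The paper's proof (Theorem~\ref{thm:200}) applies the Greenlees--May identity directly to $M$: it first establishes, as Proposition~\ref{prop:250}, that $\mrm{L}\La_{\a}(M) \cong \La_{\a}(M) = \what{M}$ whenever $M$ is $\a$-adically flat (not merely flat). With that in hand, for any $\a$-torsion $N$ the MGM machinery from \cite{PSY1} gives an isomorphism
\[
N \ot_A^{\mrm{L}} M \;\iso\; N \ot_A^{\mrm{L}} \mrm{L}\La_{\a}(M) \;\cong\; N \ot_A^{\mrm{L}} \what{M}
\]
in $\cat{D}(A)$, and the theorem follows immediately.

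Your proof runs the same MGM step, but only on the free terms $P_j$ of a resolution of $M$, and then reassembles via the resolution and Theorems~\ref{thm:180} and~\ref{thm:261}. In effect, your ``free case'' lemma is precisely the paper's argument specialized to free modules; once you notice (as in Proposition~\ref{prop:250}) that $\mrm{L}\La_{\a} \cong \La_{\a}$ already holds for $\a$-adically flat modules, the resolution becomes superfluous and you can apply the MGM identity to $M$ itself. The paper in fact reverses your dependency: the free case (Lemma~\ref{lem:310}) is recorded as a \emph{corollary} of Theorem~\ref{thm:200}, not an ingredient in its proof. Your route has the merit of being more hands-on, staying closer to explicit complexes; the paper's route is shorter and isolates the single homological fact (Proposition~\ref{prop:250}) that makes the direct approach work.
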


But then the two notions of flatness merge in the noetherian case:

\begin{thm} \label{thm:281}
Let $A$ be a noetherian commutative ring, let $\a$ be an ideal in $A$, 
and let $\what{M}$ be an $\a$-adically flat $\a$-adically complete $A$-module. 
Then $\what{M}$ is a flat $A$-module. 
\end{thm}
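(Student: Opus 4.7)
The plan is to reduce by standard dévissage to Tor against cyclic modules of the form $A/\p$, then perform a noetherian induction combined with the derived Nakayama lemma.

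Since $\opn{Tor}^A_i(-, \what{M})$ commutes with filtered colimits, it suffices to show $\opn{Tor}^A_i(N, \what{M}) = 0$ for $i > 0$ and every finitely generated $A$-module $N$. Since $A$ is noetherian, any such $N$ has a prime filtration with subquotients of the form $A/\p$, so by the long exact sequence of Tor it is enough to treat $N = A/\p$. Set $T_i(\p) := \opn{Tor}^A_i(A/\p, \what{M})$ and consider $S := \{\p \in \Sp A : T_i(\p) \neq 0 \text{ for some } i > 0\}$. By noetherian induction it suffices to derive a contradiction from the existence of a maximal element $\p \in S$.

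For such $\p$: if $\a \subseteq \p$ then $A/\p$ is $\a$-torsion and $T_i(\p) = 0$ by $\a$-adic flatness; hence $\a \not\subseteq \p$, and we pick $a \in \a \setminus \p$. Every prime appearing in a prime filtration of $A/(\p + (a))$ strictly contains $\p$ (it contains $a$), so the maximality of $\p$ and dévissage yield $\opn{Tor}^A_i(A/(\p + (a)), \what{M}) = 0$ for $i > 0$. The short exact sequence
\[
0 \to A/\p \xar{\,a\,} A/\p \to A/(\p+(a)) \to 0
\]
(in which $a$ is a nonzerodivisor on $A/\p$) and its associated long exact sequence of Tor then show that multiplication by $a$ is bijective on each $T_i(\p)$.

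The last step is the derived Nakayama lemma. Since $A$ is noetherian and $\what{M}$ is $\a$-adically complete, $\what{M}$ is derived $\a$-complete in the sense of \cite{PSY1}; since $A/\p$ admits a resolution by finitely generated free $A$-modules and derived $\a$-complete modules form a weak Serre subcategory of $\cat{Mod} A$, each $T_i(\p)$ is also derived $\a$-complete, and in particular derived $(a)$-complete. But a derived $(a)$-complete module on which $a$ acts invertibly must vanish, since $\opn{Hom}_A(A[1/a], T_i(\p)) \cong T_i(\p)$ while derived $(a)$-completeness forces $\opn{Hom}_A(A[1/a], -)$ to vanish. This gives $T_i(\p) = 0$, the desired contradiction. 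The main obstacle I anticipate is the invocation of the two facts about derived complete modules (preservation under Tor with finitely generated modules, and Nakayama vanishing); both are standard within the MGM framework of \cite{PSY1, PSY2} but require some care to cite at the right level of generality.
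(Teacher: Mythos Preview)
Your proof is correct and takes a genuinely different route from the paper's.

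The paper (see Corollary~\ref{cor:320}) first uses Theorem~\ref{thm:168} to observe that the induced system $\{A_k \ot_A \what{M}\}$ is a flat $\a$-adic system, and then invokes Theorem~\ref{thm:295}. The latter is the heavy step: it constructs a free resolution of the $\a$-adic system (Section~\ref{sec:res-adic-sys}), passes to the inverse limit to obtain a resolution of $\what{M}$ by $\a$-adically free modules $\opn{F}_{\mrm{dec}}(Z,\what{A})$ (which are flat when $A$ is noetherian, by \cite{Ye1}), and then uses Lemma~\ref{lem:295} and a Mittag-Leffler argument to compute $N \ot_A \what{P}$ for finitely generated $N$.

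Your argument bypasses all of the decaying-function machinery. The d\'evissage to $A/\p$ and noetherian induction on primes are classical, and the endgame---showing that $T_i(\p)$ is derived $\a$-complete (hence derived $(a)$-complete) and invoking the derived Nakayama lemma---lies squarely within the MGM framework already used in Section~\ref{sec:WPR}. The two external facts you flag are indeed standard: classically $\a$-adically complete modules are derived $\a$-complete for finitely generated $\a$ (e.g.\ \cite[Tag 091R]{SP}), and derived complete modules form a weak Serre subcategory closed under cohomology of bounded-above complexes with derived-complete terms (\cite[Tag 091U]{SP}). Your approach is shorter and more portable; the paper's approach, in exchange, proves the stronger Theorem~\ref{thm:295} about arbitrary flat $\a$-adic systems, from which the present statement falls out as a corollary.
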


Theorem \ref{thm:280} is repeated as Theorem \ref{thm:200} in the body of the 
paper. The proof uses derived categories and the MGM equivalence. 
On the other hand, Theorem \ref{thm:281} (repeated as Corollary \ref{cor:320}), 
is a consequence of item (2) of the following theorem, combined with Theorem 
\ref{thm:261}. 

An $\a$-adic system $\{ M_k \}_{k \in \N}$ is called {\em flat} if each $M_k$ 
is a flat $A_k$-module.  

\begin{thm} \label{thm:262}
Let $A$ be a commutative ring, let $\a$ be an ideal in $A$, and 
let $\{ M_k \}_{k \in \N}$ be a flat  $\a$-adic system, with limit 
$\what{M}$.

\begin{enumerate} 
\item If the ideal $\a$ is weakly proregular, then $\what{M}$ is an 
$\a$-adically flat $A$-module. 
\item If the ring $A$ is noetherian, then $\what{M}$ is a flat $A$-module. 
\end{enumerate}
\end{thm}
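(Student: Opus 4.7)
The plan is to exploit Theorem~\ref{thm:180} at the outset of both parts (the ideal $\a$ is finitely generated in each case, since $A$ is noetherian in~(2) and weak proregularity presupposes finite generation in~(1)): it gives that $\what M$ is $\a$-adically complete and the canonical maps $A_k \otimes_A \what M \to M_k$ are bijective. In particular $A_0 \otimes_A \what M \cong M_0$ is flat over $A_0$ by hypothesis, so the non-Tor content of condition~(iii) in Theorem~\ref{thm:261} is automatic.

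For part~(2), I would prove the stronger statement that $\opn{Tor}^A_i(N, \what M) = 0$ for every finitely generated $A$-module $N$ and every $i > 0$; this forces $\what M$ to be flat, by the standard noetherian characterization of flatness. Pick a resolution $F_\bullet \to N$ by finitely generated free $A$-modules. Each $F_j$ being finitely generated free, $F_j \otimes_A -$ commutes with inverse limits, so $F_\bullet \otimes_A \what M \cong \lim_k F_\bullet \otimes_A M_k$. The transitions $M_{k+1} \twoheadrightarrow M_k$ are surjective (their kernel is $\a^{k+1} M_{k+1}$, from the $\a$-adic system condition), so the Milnor short exact sequence
\[
0 \to {\textstyle R^1 \lim_k} \opn{Tor}^A_{i+1}(N, M_k) \to \opn{Tor}^A_i(N, \what M) \to {\textstyle \lim_k} \opn{Tor}^A_i(N, M_k) \to 0
\]
applies. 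Change of rings together with the flatness of $M_k$ over $A_k$ yields $\opn{Tor}^A_i(N, M_k) \cong \opn{Tor}^A_i(N, A_k) \otimes_{A_k} M_k$. The crucial noetherian input is the Artin--Rees consequence that $\{\opn{Tor}^A_i(N, A_k)\}_k$ is pro-zero for every $i > 0$ (proved by applying Artin--Rees to the image submodules of the differentials of $F_\bullet$); pro-zeroness is preserved by the level-wise tensor with $\{M_k\}$, making both $\lim$ and $R^1\lim$ in the Milnor sequence vanish.

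For part~(1), by Theorem~\ref{thm:261} it remains to show $\opn{Tor}^A_i(A_0, \what M) = 0$ for $i > 0$. I would mirror the strategy of part~(2), substituting the Koszul tower for a finitely generated free resolution (the latter may not exist over a non-noetherian $A$). Let $\bsym a = (a_1, \ldots, a_n)$ be a weakly proregular sequence generating $\a$, and let $K_\bullet(\bsym a^{[k]})$ denote the Koszul complex on $(a_1^k, \ldots, a_n^k)$: a bounded complex of finitely generated free $A$-modules with $H_0 = A/(\bsym a^{[k]})$. The quotients $\{A/(\bsym a^{[k]})\}_k$ are cofinal with $\{A_k\}_k$, and weak proregularity is precisely the assertion that the higher Koszul homologies $\{H_i K_\bullet(\bsym a^{[k]})\}_k$ are pro-zero for every $i > 0$. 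A Milnor-type argument for the Koszul tower tensored against $\{M_k\}_k$, combined with the flatness of $M_k$ over $A_k$ and this pro-zeroness, should deliver the required vanishing.

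The main technical obstacle lies in part~(1): because the Koszul complexes are not honest resolutions of the quotients $A/(\bsym a^{[k]})$, the passage from Koszul homologies to the Tor modules $\opn{Tor}^A_i(A_0, \what M)$ must be executed within a pro-derived-category framework, with weak proregularity absorbing the Koszul discrepancies uniformly in $k$. The level-wise flatness of each $M_k$ over $A_k$ is essential here, as it ensures these pro-corrections survive tensoring with the inverse system. Part~(2) is cleaner: the resolution is honest, and the only nontrivial ingredient is the classical Artin--Rees pro-vanishing statement.
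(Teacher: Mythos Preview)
Your argument for part~(2) is correct and takes a genuinely different route from the paper. The paper builds a \emph{free resolution of the $\a$-adic system} $\{M_k\}$ (Section~\ref{sec:res-adic-sys}), passes to the limit to obtain a resolution of $\what{M}$ by $\a$-adically free modules $\what{P}^i = \opn{F}_{\mrm{dec}}(Z_i,\what{A})$, shows these are flat over the noetherian ring $A$ (via decaying-function techniques from \cite{Ye1}), and then uses Lemma~\ref{lem:295} to commute $N\otimes_A(-)$ past the inverse limit. You instead resolve the \emph{test module} $N$ by finitely generated free modules and invoke Artin--Rees pro-vanishing of $\{\opn{Tor}^A_i(N,A_k)\}_k$ together with the Milnor sequence. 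Your route is more elementary---it avoids the decaying-function machinery and the special resolution of the system---and is essentially the Gabber--Ramero style argument alluded to in the introduction. The paper's route, on the other hand, yields the auxiliary structural result (Lemma~\ref{lem:290}) that $\what{M}$ admits a resolution by $\a$-adically free modules, which is of independent interest.

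Your part~(1), however, has a real gap that you yourself flag. Once $A$ is no longer noetherian, $A_0$ need not admit a resolution by finitely generated free $A$-modules, so your part~(2) argument cannot be rerun. Replacing the resolution by the Koszul tower $\{K(\bsym a^{[k]})\}_k$ introduces the higher Koszul homologies $H_i(K_k)$, and weak proregularity only controls these \emph{as a pro-system in $k$}. To extract $\opn{Tor}^A_i(A_0,\what M)=0$ from Koszul computations you would need a pro-spectral-sequence or pro-filtration argument identifying $\{\opn{Tor}^A_i(B_k,\what M)\}_k$ with $\{H_i(K_k\otimes_A\what M)\}_k$ in the pro-category (where $B_k=A/(\bsym a^{[k]})$), and then a separate computation of the latter via the Milnor sequence in $l$. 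Neither step is carried out, and the second is not straightforward: for fixed $k$ the system $\{H_i(K_k\otimes_A A_l)\}_l$ is \emph{not} pro-zero, so the flatness of $M_l$ over $A_l$ does not by itself kill anything. The paper sidesteps all of this: it again resolves $\what M$ by $\a$-adically free modules $\what P^i$, and shows each $\what P^i$ is $\a$-adically flat by appealing to Theorem~\ref{thm:200}, whose proof uses the MGM equivalence and derived torsion in an essential way. That derived-category input is precisely what your outline is missing.
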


Parts (1) and (2) of the theorem are repeated as Theorems \ref{thm:290} and 
\ref{thm:295} respectively. The proofs use {\em free resolutions of 
$\a$-adic systems}, and properties of 
{\em modules of decaying functions} that were established in \cite{Ye1}.

The reader might wonder whether the distinction between flatness and 
adic flatness is genuine. The answer is that these notions are indeed distinct. 
In Theorem \ref{thm:205} we give an example of a ring $A$ and a weakly 
proregular ideal $\a \subseteq A$, such that 
the $\a$-adic completion $\what{A}$ -- which is an $\a$-adically flat 
$A$-module by Theorem \ref{thm:280} -- {\em is not flat over $A$}. 
This example is not exotic at all. The ring is 
\[ A := \K[[t_1]] \ot_{\K} \K[[t_2]] , \]
where $\K$ is a field of characteristic $0$; and the ideal is 
$\a := (t_1, t_2)$. 

In Sections \ref{sec:adic-flatness} and \ref{sec:WPR} we use {\em derived 
category} methods. All the necessary background on these methods can be found in 
the new book \cite{Ye3}. Perhaps some of the results in Section 
\ref{sec:adic-flatness} could be proved using only classical homological 
algebra (i.e.\ Ext, Tor and related spectral sequences); but even if so, the 
proofs would very likely be much longer and more difficult to understand. 
The proofs in Section \ref{sec:WPR}, that involve the MGM 
Equivalence, definitely require the use of derived categories. 

The content of this paper belongs to commutative algebra. However, it is 
expected to have applications in {\em noncommutative 
ring theory} and {\em representation theory}. Indeed, our paper \cite{Ye1} was 
written as part of our project on {\em deformation quantization}, for which we 
needed sharper results on $\a$-adically complete flat noncommutative central 
$A$-rings (and sheaves of this type). See the paper \cite{Ye2} and its 
references.

\medskip \noindent
{\bf Acknowledgments}. 
Thanks to Liran Shaul, Sean Sather-Wagstaff, Asaf Yekutieli, Steven Kleiman, 
Brian Conrad, Ofer Gabber, Lorenzo Ramero, Pierre Deligne, Johan de Jong,  
Ilya Tyomkin and Leonid Positselski for helpful discussions. We also wish to 
thank the anonymous referee, for reading the paper carefully and suggesting 
several improvements.

\section{\texorpdfstring{$\a$}{a}-Adic Systems} 
\label{sec:adic-sys}

Throughout the paper $A$ is a commutative ring, and $\a$ is an ideal in it. 
Recall the inverse system of rings 
$\{ A_k \}_{k \in \N}$, where $A_k = A / \a^{k + 1}$. 

\begin{dfn} \label{dfn:15}
An {\em $\a$-adic system of $A$-modules} is data 
\[ \bsym{M} = \bigl( \{ M_k \}_{k \in \N}, \, \{ \nu_{k} \}_{k \in \N} \bigr) , 
\]
where $M_k$ is an $A_k$-module, and 
$\nu_{k} : M_{k + 1} \to M_k$
is an $A$-module homomorphism, called the {\em $k$-th transition}.
The condition is that for every $k$, the homomorphism 
\[ A_k \ot_{A_{k + 1}} M_{k + 1} \to M_k  \]
induced by the transition $\nu_k$ is bijective. 
\end{dfn}

Usually the collection of transitions $\{ \nu_k \}_{k \in \N}$
will remain implicit.

\begin{dfn} \label{dfn:22}
Let $\bsym{M} = \{ M_k \}_{k \in \N}$ and 
$\bsym{N} = \{ N_k \}_{k \in \N}$ be $\a$-adic systems. A {\em morphism of 
$\a$-adic systems}
$\bsym{\phi}: \bsym{M} \to \bsym{N}$
is a collection of $A$-module homomorphisms 
$\phi_k : M_k \to N_k$, that commute with the transitions in 
$\bsym{M}$ and $\bsym{N}$. 
In this way the $\a$-adic systems become a category, that we denote by 
$\cat{Sys} (A, \a)$.
\end{dfn}

\begin{dfn} \label{dfn:200}
Let $\bsym{M} = \{ M_k \}_{k \in \N}$ be an $\a$-adic system of $A$-modules.
The {\em limit} of $\bsym{M}$ is the $A$-module 
$\what{M} := \lim_{\leftarrow k} \, M_k$.
\end{dfn}

\begin{exa} \label{exa:126}
Consider an $A$-module $M$. There is an {\em induced $\a$-adic system}
$\bsym{M} = \lb \{ M_k \}_{k \in \N}$, 
defined by $M_k := A_k \ot_A M$,
with the obvious transitions. 
The limit of the system $\bsym{M}$ is the $\a$-adic completion of the module 
$M$. 
\end{exa}

Two obvious questions come to mind:

\begin{que} \label{que:265}
Is the limit $\what{M}$ of every $\a$-adic system $\{ M_k \}_{k \in \N}$ an 
$\a$-adically complete module?
\end{que}

\begin{que} \label{que:266}
Is every $\a$-adic system $\{ M_k \}_{k \in \N}$ induced by a module $M$~?
\end{que}

Suppose $A$ is noetherian, with $\a$-adic  completion $\what{A}$.
Let $\{ M_k \}_{k \in \N}$ be an $\a$-adic system such that each 
$M_k$ is a finitely generated $A_k$-module. In this case it is 
well known that the limit $\what{M}$ is a finitely generated $\what{A}$-module, 
and hence it is complete. Moreover, the system $\{ M_k \}_{k \in \N}$
is induced from $\what{M}$. See \cite{AM}, \cite{Ma} or \cite{CA}. We see that 
both questions have positive answers in this case. 

At the opposite end, here is a counterexample to Question \ref{que:265},
when the ideal $\a$ is not finitely generated. 

\begin{exa} \label{exa:260}
Take the ring  
$A = \K[t_0, t_1, \ldots ]$ 
of polynomials in countably many variables over a field $\K$. 
Let $\a$ be the maximal ideal generated by the variables. 
Consider the $A$-module $M = A$. Let 
$\bsym{M} = \{ M_k \}_{k \in \N}$ be the 
$\a$-adic system induced by $M$ (so actually $M_k = A_k$), and let 
$\what{M}$ be the limit of this system.
Then $\what{M}$ is the $\a$-adic completion of the module $M$. As shown in 
\cite[Example 1.8]{Ye1}, the $A$-module $\what{M}$ is not $\a$-adically 
complete. It is also shown there that the canonical homomorphisms 
$A_k \ot_A \what{M} \to M_k$ are not bijective. 
\end{exa}

We could not find a counterexample to Question \ref{que:266}.

As the next theorem shows, both questions have positive answers when the 
ideal $\a$ is finitely generated. 

\begin{thm} \label{thm:230}
Let $A$ be a commutative ring, let $\a$ be a finitely generated ideal in $A$, 
and let $\{ M_k \}_{k \in \N}$ be an $\a$-adic system of 
$A$-modules, with limit $\what{M}$. Then\tup{:}
\begin{enumerate}
\item The $A$-module $\what{M}$ is $\a$-adically complete.

\item For every $k \geq 0$ the canonical homomorphism 
$A_k \ot_A \what{M} \to M_k$ is bijective.
\end{enumerate} 
\end{thm}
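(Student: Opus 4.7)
The plan is to prove (2) first and then derive (1) as a formal consequence. The central observation is that the transitions $\nu_k : M_{k+1} \to M_k$ are surjective: since $M_k \cong A_k \ot_{A_{k+1}} M_{k+1} \cong M_{k+1}/\a^{k+1} M_{k+1}$, the map $\nu_k$ is the canonical surjection. Because the inverse system $\{M_k\}$ is indexed by $\N$ with surjective transitions, the projections $\pi_k : \what{M} \to M_k$ are surjective by the standard Mittag-Leffler argument, and moreover $\varprojlim^1 M_k = 0$.

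The heart of the proof is to identify $\ker(\pi_k)$ with $\a^{k+1} \what{M}$. First, I would note that for each $j \geq k$ there is a short exact sequence
\[ 0 \to \a^{k+1} M_j \to M_j \to M_k \to 0, \]
using that $M_j/\a^{k+1} M_j \cong A_k \ot_{A_j} M_j \cong M_k$. The transitions on $\{\a^{k+1} M_j\}_j$ are surjective (any element $\sum a_\alpha y_\alpha \in \a^{k+1} M_j$ lifts by lifting each $y_\alpha$ through the surjective $\nu_j$), so $\varprojlim^1 \a^{k+1} M_j = 0$, and passing to the limit gives the exact sequence
\[ 0 \to \varprojlim_j \a^{k+1} M_j \to \what{M} \to M_k \to 0. \]
Thus $\ker(\pi_k) = \varprojlim_j \a^{k+1} M_j$.

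Next I would use that $\a$ is finitely generated to write $\a^{k+1}$ as generated by finitely many elements $b_1, \ldots, b_r$. Then the surjections $\psi_j : M_j^r \twoheadrightarrow \a^{k+1} M_j$, $(y_1, \ldots, y_r) \mapsto \sum b_i y_i$, fit into short exact sequences with both outer systems having surjective transitions, so $\{\ker \psi_j\}_j$ does too. Passing to the limit and using that finite direct sums commute with $\varprojlim$, I would get a surjection $\what{M}^r \twoheadrightarrow \varprojlim_j \a^{k+1} M_j$ whose image inside $\what{M}$ is exactly $\a^{k+1} \what{M}$. Combining with the previous step yields $\a^{k+1} \what{M} = \ker(\pi_k)$, so $A_k \ot_A \what{M} \cong \what{M}/\a^{k+1}\what{M} \iso M_k$, which is (2). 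For (1), part (2) gives $\varprojlim_k A_k \ot_A \what{M} \cong \varprojlim_k M_k = \what{M}$, and a straightforward check shows the resulting map is the canonical $\tau_{\what{M}}$; hence $\what{M}$ is $\a$-adically complete.

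The main obstacle is the second step: the naive attempt to lift a decomposition $x_j = \sum b_i y_{i,j}$ to a coherent element of $\bigoplus_{i} \what{M}$ fails because the $y_{i,j}$ are not a priori compatible in $j$. The trick is to avoid lifting element-by-element and instead exploit the Mittag-Leffler surjectivity of the systems of kernels of the multiplication maps $\psi_j$, which is precisely where finite generation of $\a$ enters (so that $\bigoplus_i$ commutes with $\varprojlim$). Everything else is essentially formal from surjective transitions plus exactness of $\varprojlim$ on Mittag-Leffler systems.
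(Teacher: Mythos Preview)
Your proof has a genuine gap at the key step. You claim that since both the middle system $\{M_j^r\}$ and the right system $\{\a^{k+1} M_j\}$ in the short exact sequences $0 \to \ker \psi_j \to M_j^r \to \a^{k+1} M_j \to 0$ have surjective transitions, so does $\{\ker \psi_j\}$. This implication is false: surjectivity of the middle and right vertical maps in a morphism of short exact sequences does \emph{not} force surjectivity on the kernels. Concretely, take $A = \K[t]$, $\a = (t)$, $M_j = A_j = \K[t]/(t^{j+1})$, and $k = 0$; then $r = 1$, $b_1 = t$, and $\ker \psi_j = \opn{Ann}_{A_j}(t) = t^j A_j \cong \K$, but the transition $\ker \psi_{j+1} \to \ker \psi_j$ is the zero map (since $t^{j+1}$ already vanishes in $A_j$). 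So from your stated reasoning you cannot conclude that $\what{M}^r \to \varprojlim_j \a^{k+1} M_j$ is surjective, and hence you have not shown $\ker(\pi_k) \subseteq \a^{k+1}\what{M}$.

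The desired surjectivity is nonetheless true, but it requires a successive-approximation argument: given $x \in \ker \pi_k$, one builds $y^{(i)}_{j} \in M_j$ with $\sum_i b_i y^{(i)}_j = x_j$ by lifting and correcting at each stage by elements $w_i \in \a^{j-k} M_{j+1}$; the resulting family is not literally compatible, but its projections to any fixed level $M_l$ stabilize once $j > l$, yielding the desired elements of $\what{M}$. This is exactly the nontrivial content behind the fact that the $\a$-adic completion of any module is complete when $\a$ is finitely generated. The paper leans on that fact directly (cited as \cite[Corollary 3.6]{Ye1}) and argues in the \emph{opposite order}: it first proves (1) by exhibiting $\what{M}$ as a direct summand of its own completion $\what{\what{M}}$ via the splitting $\phi \circ \tau = \opn{id}_{\what{M}}$, and then deduces (2) by a Mittag-Leffler argument on the system of kernels $L_k := \ker\bigl(A_k \ot_A \what{M} \to M_k\bigr)$ (indexed by $k$, not by $j$ with $k$ fixed), using that $\phi = \varprojlim_k \phi_k$ is already known to be bijective.
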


\begin{proof}
(1) For each $k$ let 
$\what{M}_k := A_k \ot_A \what{M}$. 
In this way we obtain a second $\a$-adic system 
$\{ \what{M}_k  \}_{k \in \N}$. Its limit is the module 
$\mhat{\what{M}}$, which is the $\a$-adic completion of the module 
$\what{M}$. According to \cite[Corollary 3.6]{Ye1} the $A$-module 
$\mhat{\what{M}}$ is $\a$-adically complete.

For each $k$ there is the canonical homomorphism 
$\pi_k : \what{M} \to M_k$ of the limit, and the canonical homomorphism 
$\tau_k : \what{M} \to \what{M}_k$ 
induced by $A \to A_k$. There is also the canonical homomorphism
$\phi_k : \what{M}_k \to M_k$ from item (2). 
They form a commutative diagram 
\[ \UseTips \xymatrix @C=6ex @R=6ex {
\what{M}
\ar[r]^{\tau_k}
\ar@(ur,ul)[rr]^{\pi_k}
&
\what{M}_k
\ar[r]^{\phi_k}
& 
M_k
} \]
Passing to the limit we obtain the commutative diagram 
\[ \UseTips \xymatrix @C=6ex @R=6ex {
\what{M}
\ar[r]^{\tau}
\ar@(ur,ul)[rr]^{=}
&
\mhat{\what{M}}
\ar[r]^{\phi}
& 
\what{M}
} \]
We see that $\tau = \tau_{\what{M}}$ is a split injection, and thus $\what{M}$ 
is a direct summand of $\mhat{\what{M}}$. But a direct summand of an  
$\a$-adically complete module is itself $\a$-adically complete; 
so we conclude that $\what{M}$ is $\a$-adically complete. 
This means that the homomorphism 
$\tau_{\what{M}} : \what{M} \to \what{\what{M}}$ 
is bijective. But then $\phi$ is also bijective. 

\medskip \noindent 
(2) Continuing with the same notation, for each $k$ let 
$L_k := \opn{Ker}(\phi_k)$. We get an inverse system of exact sequences 
\begin{equation} \label{eqn:230}
 0 \to L_k \to \what{M}_k \xar{\phi_k} M_k \to  0 . 
\end{equation}
If we apply the functor $A_k \ot_{A_{k + 1}} -$ to the sequence with index
$k + 1$, we get an exact sequence isomorphic to 
\[  A_k \ot_{A_{k + 1}} L_{k + 1} \to \what{M}_k \xar{\phi_k} M_k \to  0 . \]
We see that the homomorphism $L_{k + 1} \to L_k$ is surjective.
Define $\what{L} := \lim_{\leftarrow k} \, L_k$.
Passing to the limit in (\ref{eqn:230}), the Mittag-Leffler argument says that 
\[ 0 \to \what{L} \to \mhat{\what{M}} \xar{\phi} \what{M} \to 0  \]
is an exact sequence. We already know that $\phi$ is bijective, and therefore 
$\what{L} = 0$. Finally, because the canonical homomorphism 
$\what{L} \to L_k$ is surjective, it follows that $L_k = 0$ and that $\phi_k$ is 
bijective. 
\end{proof}

\begin{cor} \label{cor:333}
If the ideal $\a$ is finitely generated, then every $\a$-adic system \lb 
$\{ M_k \}_{k \in \N}$ is induced from a module.
\end{cor}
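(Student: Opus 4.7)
The plan is to take $M := \what{M}$, the limit of the given $\a$-adic system $\{ M_k \}_{k \in \N}$, and exhibit an isomorphism of $\a$-adic systems between the induced system $\{ A_k \ot_A \what{M} \}_{k \in \N}$ (as in Example \ref{exa:126}) and the original system $\{ M_k \}_{k \in \N}$. Once this is done, by Definition \ref{dfn:15} and Definition \ref{dfn:22} the claim follows immediately.

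The candidate isomorphism at each level $k$ is the canonical homomorphism $\phi_k : A_k \ot_A \what{M} \to M_k$ that already appears in the statement of Theorem \ref{thm:230}(2); recall that $\phi_k$ is induced by the limit projection $\pi_k : \what{M} \to M_k$ via the universal property of the tensor product, once one observes that $\pi_k$ factors through $A_k \ot_A \what{M}$ because $\a^{k+1}$ annihilates $M_k$. By Theorem \ref{thm:230}(2), each $\phi_k$ is bijective, so only the compatibility with transitions needs to be checked.

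The compatibility is a routine naturality check: the transition in the induced system is just the map $A_k \ot_A \what{M} \to A_{k+1} \ot_A \what{M}$ obtained from the ring surjection $A_{k+1} \surj A_k$, while the transition in the original system is $\nu_k : M_{k+1} \to M_k$. The family $\{\pi_k\}$ is compatible with $\{\nu_k\}$ by the very definition of the limit, and passing through the tensor product preserves this compatibility, so $\nu_k \circ \phi_{k+1} = \phi_k \circ (\opn{id} \ot \pi_k')$ where $\pi_k'$ is the obvious transition on the induced system. Hence $\{\phi_k\}_{k \in \N}$ is a morphism in $\cat{Sys}(A, \a)$, and since every component is bijective, it is an isomorphism.

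I do not anticipate any real obstacle: the content of the corollary is entirely packaged inside Theorem \ref{thm:230}(2), and what remains is only the bookkeeping verification that the bijections $\phi_k$ assemble into a morphism of $\a$-adic systems in the sense of Definition \ref{dfn:22}.
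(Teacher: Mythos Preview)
Your proposal is correct and takes essentially the same approach as the paper: both invoke Theorem~\ref{thm:230}(2) to conclude that the limit $\what{M}$ induces the given system, and the paper's proof is simply the one-line version of what you write out. (One harmless slip: the transition in the induced system should go $A_{k+1} \ot_A \what{M} \to A_k \ot_A \what{M}$, not the reverse, but your commutativity equation shows you have the direction right.)
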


\begin{proof}
By item (2) of the theorem, the system $\{ M_k \}_{k \in \N}$ is induced from 
the module $\what{M}$.
\end{proof}

Let $\cat{Mod}_{\a\tup{-com}} A$ be the category of $\a$-adically complete 
modules (a full subcategory of $\cat{Mod} A$). 

\begin{cor} \label{cor:263}
If the ideal $\a$ is finitely generated, then the functor sending a module $M$ 
to the induced $\a$-adic system $\{ M_k \}_{k \in \N}$ is an equivalence of 
categories
\[ \cat{Mod}_{\a\tup{-com}} A \to \cat{Sys} (A, \a) . \]
Its quasi-inverse is the limit functor. 
\end{cor}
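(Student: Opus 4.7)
The plan is to package together the two parts of Theorem~\ref{thm:230}. Define the two candidate functors: $F \colon \cat{Mod}_{\a\tup{-com}} A \to \cat{Sys}(A,\a)$ sending $M$ to the induced system $\{ A_k \ot_A M \}_{k \in \N}$ (as in Example~\ref{exa:126}), and $G \colon \cat{Sys}(A,\a) \to \cat{Mod}_{\a\tup{-com}} A$ sending $\bsym{M} = \{ M_k \}$ to $\what{M} = \lim_{\leftarrow k} \, M_k$. The first sanity check is that $G$ actually lands in the subcategory of $\a$-adically complete modules; this is precisely item~(1) of Theorem~\ref{thm:230}, and it uses the finite-generation hypothesis on $\a$ in an essential way. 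Functoriality of $F$ and $G$ on morphisms is automatic from the universal properties of $\ot_A$ and of $\lim_{\leftarrow}$.

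Next I would produce the natural isomorphism $G \circ F \cong \mrm{id}$ on $\cat{Mod}_{\a\tup{-com}} A$. For an $\a$-adically complete module $M$, the composition $G(F(M))$ is by definition the $\a$-adic completion $\what{M}$, and the canonical map $\tau_M \colon M \to \what{M}$ is bijective exactly because $M$ is complete. Naturality in $M$ is automatic from the naturality of $\tau$.

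For the other direction, $F \circ G \cong \mrm{id}$ on $\cat{Sys}(A,\a)$, I would proceed as follows. Given a system $\bsym{M} = \{ M_k \}$, the composition produces the system $\{ A_k \ot_A \what{M} \}_{k \in \N}$. The projection maps $\what{M} \to M_k$ factor through $A_k \ot_A \what{M}$, yielding canonical $A$-module homomorphisms $\phi_k \colon A_k \ot_A \what{M} \to M_k$ that commute with the transitions on both sides; this is the candidate morphism of adic systems. Item~(2) of Theorem~\ref{thm:230} asserts that each $\phi_k$ is bijective. Naturality in $\bsym{M}$ is immediate from the construction.

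In short, there is no substantial obstacle here: the hard content was already absorbed into Theorem~\ref{thm:230}, and the present corollary is essentially a categorical repackaging. The only care required is verifying that the canonical maps $\tau_M$ and $\phi_k$ assemble into morphisms in the correct categories and that the resulting transformations are natural, which is a routine diagram chase.
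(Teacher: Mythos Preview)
Your proposal is correct and follows essentially the same approach as the paper's proof: both directions rest on Theorem~\ref{thm:230}, with item~(1) ensuring that $G$ lands in $\cat{Mod}_{\a\tup{-com}} A$ and item~(2) giving the isomorphism $F \circ G \cong \mrm{id}$, while $G \circ F \cong \mrm{id}$ comes from the definition of completeness. Your write-up is slightly more explicit about the categorical bookkeeping (naturality, functoriality), but the substance is identical.
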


\begin{proof}
Let $M$ be a complete module, and let $\bsym{M}$ be the induced $\a$-adic 
system. As mentioned in Example \ref{exa:126}, the limit of $\bsym{M}$
is the $\a$-adic completion $\what{M}$ of $M$. But $M \cong \what{M}$.

Conversely, if we start with an $\a$-adic system $\bsym{M}$, 
with limit $\what{M}$, then by Theorem \ref{thm:230} the module 
$\what{M}$ is complete, and the $\a$-adic system induced by $\what{M}$ is 
isomorphic to $\bsym{M}$. 
\end{proof}

\section{Free Resolutions of \texorpdfstring{$\a$}{a}-Adic Systems} 
\label{sec:res-adic-sys}

As before, $A$ is a commutative ring and $\a$ is an ideal in it. 

Let us recall some definitions 
from our paper \cite{Ye1}. Let $Z$ be a set and let $N$ be an 
$A$-module. The set of all functions $f : Z \to N$ is denoted by 
$\opn{F}(Z, N)$. 
It is an $A$-module in the obvious way. The {\em support} of a  function 
$f : Z \to N$ is the set 
$\{ z \in Z \mid f(z) \neq 0 \}$.  
We denote by 
$\opn{F}_{\mrm{fin}}(Z, N)$ 
the submodule of $\opn{F}(Z, N)$ consisting of finite support functions.
For $N = A$, the $A$-module 
$\opn{F}_{\mrm{fin}}(Z, A)$
is free, with basis the collection 
$\{ \de_z \}_{z \in Z}$ of delta functions.
For every $A$-module $N$ there are canonical isomorphisms
\begin{equation} \label{eqn:73}
\opn{F}_{\mrm{fin}}(Z, A) \ot_A N \cong 
\opn{F}_{\mrm{fin}}(Z, N) \cong 
\bigoplus_{z \in Z} \, N . 
\end{equation}

Given a set $Z$, we have an $\a$-adic system of $A$-modules 
$\bigl\{ \opn{F}_{\mrm{fin}}(Z, A_k) \bigr\}_{k \in \N}$. 
See Definition \ref{dfn:22} regarding morphisms of $\a$-adic systems.  

\begin{dfn} \label{dfn:20}
An $\a$-adic system of $A$-modules $\bsym{P}$
is said to be a {\em free $\a$-adic system} if there is an 
isomorphism of $\a$-adic systems 
\[ \bsym{P} \cong \bigl\{ \opn{F}_{\mrm{fin}}(Z, A_k) \bigr\}_{k \in \N}  \]
for some set $Z$.
\end{dfn}

\begin{dfn} \label{dfn:70}
Let  $\bsym{M} = \{ M_k \}_{k \in \N}$ be an $\a$-adic system of 
$A$-modules. A {\em free resolution of $\bsym{M}$} is a  diagram of $\a$-adic 
systems
\[ \cdots \to \bsym{P}^{-2} \xar{\bsym{\d}^{-1}} \bsym{P}^{-1} 
\xar{\bsym{\d}^0} \bsym{P}^{0} 
\xar{\bsym{\eta}} \bsym{M} \to 0 , \]
such that each 
$\bsym{P}^i = \{ P_k^i \}_{k \in \N}$ is a free $\a$-adic system, and for every 
$k$ the diagram of $A$-modules 
\begin{equation} \label{eqn:115}
\cdots \to P_k^{-2} \xar{\d^{-1}_k} P_k^{-1} \xar{\d^0_k} P_k^{0} 
\xar{\eta_k} M_k \to 0 
\end{equation}
is an exact sequence, i.e.\ it is a free resolution of the $A_k$-module 
$M_k$. We denote this resolution by 
$\bsym{\eta} : \bsym{P} \to \bsym{M}$. 
\end{dfn}

In other words, a free resolution of $\bsym{M} = \{ M_k \}_{k \in \N}$ is a 
commutative diagram 
\[ \UseTips \xymatrix @C=5ex @R=4ex {
&
\vdots 
\ar[d]
&
\vdots 
\ar[d]
&
\vdots 
\ar[d]
&
\vdots 
\ar[d]
\\
\cdots 
\ar[r]
&
P_1^{-2}
\ar[r]
\ar[d]
&
P_1^{-1}
\ar[r]
\ar[d]
&
P_1^{0}
\ar[r]
\ar[d]
&
M_1^{}
\ar[r]
\ar[d]
&
0
\\
\cdots 
\ar[r]
&
P_0^{-2}
\ar[r]
&
P_0^{-1}
\ar[r]
&
P_0^{0}
\ar[r]
&
M_0^{}
\ar[r]
&
0
} \]
in $\cat{Mod} A$, where 
$P^{-i}_k \cong \opn{F}_{\mrm{fin}}(Z_i, A_k)$ for some sets $Z_i$,
the vertical homomorphisms come from the ring surjections 
$A_{k + 1} \to A_k$, and the rows are exact. 

For each $k \geq 0$ let $P_k$ be the complex of free $A_k$-modules 
\begin{equation} \label{eqn:320}
P_k := \bigl( \cdots \to P_k^{-2} \xar{\d_k^{-1}} P_k^{-1} 
\xar{\d_k^0} P_k^{0} \to 0  \to \cdots \bigr) 
\end{equation}
that occurs as part of the resolution (\ref{eqn:115}).
So there is a quasi-isomorphic of complexes $\eta_k : P_k \to M_k$.

\begin{lem} \label{lem:40}
Let $B$ be a commutative ring, with nilpotent ideal $\b$, and 
let $M$ be a $B$-module. Define the ring 
$\bar{B} := B / \b$ and the $\bar{B}$-module 
$\bar{M} := \bar{B} \ot_B M$. 
Suppose 
\begin{equation} \label{eqn:40}
\cdots \to \bar{P}^{-2} \xar{\bar{\d}^{-1}} \bar{P}^{-1} \xar{\bar{\d}^0}  
\bar{P}^{0} \xar{\bar{\eta}} \bar{M} \to 0 
\end{equation}
is a free resolution of the module $\bar{M}$ over the ring $\bar{B}$,
such that for each $i \geq 0$ we have 
$\bar{P}^{-i} = \opn{F}_{\mrm{fin}}(Z_i, \bar{B})$
for some set $Z_i$. 
Also suppose that 
$\opn{Tor}^{B}_i(\bar{B}, M) = 0$ for all $i > 0$. 

Then the resolution \tup{(\ref{eqn:40})} can be lifted to $B$. Namely, there is 
a free resolution
\begin{equation} \label{eqn:41}
\cdots \to P^{-2} \xar{\d^{-1}} P^{-1} \xar{\d^0}  
P^{0} \xar{\eta} M \to 0 
\end{equation}
of the $B$-module $M$, such that 
$P^{-i} = \opn{F}_{\mrm{fin}}(Z_i, B)$
for every $i$, and the sequence \tup{(\ref{eqn:40})} is gotten from the 
sequence \tup{(\ref{eqn:41})} by applying the functor 
$\bar{B} \ot_B -$. 
\end{lem}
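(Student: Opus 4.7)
The plan is to build the resolution \eqref{eqn:41} one term at a time, by repeatedly lifting generators from the given $\bar{B}$-resolution \eqref{eqn:40}. The key input is that the vanishing Tors force the tensor product $\bar{B} \otimes_B -$ to stay exact on the relevant short exact sequences, so that the syzygies on the $B$-side reduce correctly mod $\b$ to the syzygies on the $\bar{B}$-side.

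First I would construct $P^0 := \opn{F}_{\mrm{fin}}(Z_0, B)$ and $\eta : P^0 \to M$ as follows: for each $z \in Z_0$, the element $\bar{\eta}(\de_z) \in \bar{M}$ lifts to some $m_z \in M$ (since $M \surj \bar{M}$), and I set $\eta(\de_z) := m_z$, extending $B$-linearly. By construction, applying $\bar{B} \ot_B -$ yields the given $\bar{\eta}$. To see $\eta$ is surjective, note that $\opn{Im}(\eta) + \b M = M$ (since the quotient equals $\opn{Im}(\bar{\eta}) = \bar{M}$), and iterating gives $\opn{Im}(\eta) + \b^n M = M$ for every $n$; because $\b$ is nilpotent this forces $\opn{Im}(\eta) = M$.

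Next I set $K := \opn{Ker}(\eta)$ and verify the inductive hypotheses for $K$. From the short exact sequence $0 \to K \to P^0 \to M \to 0$ and the long exact Tor sequence, the assumption $\opn{Tor}^B_1(\bar{B}, M) = 0$ gives a short exact sequence $0 \to \bar{B} \ot_B K \to \bar{P}^0 \to \bar{M} \to 0$, so $\bar{B} \ot_B K$ is canonically identified with $\opn{Ker}(\bar{\eta})$. Dimension-shifting in the same long exact sequence yields $\opn{Tor}^B_i(\bar{B}, K) \cong \opn{Tor}^B_{i+1}(\bar{B}, M) = 0$ for all $i \geq 1$, since $P^0$ is free. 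Moreover, the tail $\cdots \to \bar{P}^{-2} \to \bar{P}^{-1} \to \opn{Ker}(\bar{\eta}) \to 0$ is a free resolution (with the same index sets $Z_i$) of $\bar{B} \ot_B K$ over $\bar{B}$. Thus the pair $(K, \, \bar{B} \ot_B K)$ satisfies all hypotheses of the lemma with homological degree shifted by one.

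By induction, applying the construction of the previous paragraph to $K$ produces $P^{-1} := \opn{F}_{\mrm{fin}}(Z_1, B)$ and a surjection $P^{-1} \surj K$; composing with the inclusion $K \inj P^0$ gives $\d^0 : P^{-1} \to P^0$ with the required properties. Continuing inductively yields the full resolution \eqref{eqn:41}, and by construction the differentials reduce modulo $\b$ to those of \eqref{eqn:40}. I do not anticipate a real obstacle here: the whole argument is a standard step-wise lifting of a free resolution, and the only nontrivial input is the vanishing of $\opn{Tor}^B_i(\bar{B}, M)$, which is used precisely to transfer exactness of the $\bar{B}$-resolution to the $B$-resolution at each inductive step.
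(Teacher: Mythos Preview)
Your proof is correct and follows essentially the same approach as the paper's: lift generators and use nilpotency of $\b$ (the Nakayama Lemma) to get surjectivity, then use dimension-shifting on Tor to propagate the hypotheses to the syzygy module. The only difference is organizational---the paper inducts on the length of a partial resolution while you recurse by replacing $M$ with its first syzygy $K$---but these are equivalent formulations of the same argument.
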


\begin{proof}
The proof is by induction on $i = 0, 1, 2, \ldots$.

We start with $i = 0$. For each $z \in Z_0$, we choose an arbitrary element
$m_z \in M$ lifting the element $\bar{\eta}(\de_z) \in \bar{M}$. 
This determines a homomorphism of $B$-modules 
\[ \eta : \opn{F}_{\mrm{fin}}(Z_0, B) \to M . \]
The Nakayama Lemma (in its nilpotent version) says that $\eta$ is surjective. 

Now we take some $i \geq 0$, and we assume that a partial free resolution 
\begin{equation} \label{eqn:55}
P^{-i} \xar{\d^{-i + 1}} P^{-i + 1} \to \cdots \xar{\d^0} P^{0} 
\xar{\eta} M \to 0 
\end{equation}
was found, lifting 
\begin{equation} \label{eqn:56}
\bar{P}^{-i} \xar{\bar{\d}^{-i + 1}} \bar{P}^{-i + 1} \to \cdots 
\xar{\bar{\d}^0} \bar{P}^{0} \xar{\bar{\eta}} \bar{M} \to 0 \, ; 
\end{equation}
namely the sequence (\ref{eqn:56}) is gotten from (\ref{eqn:55}) by the 
operation $\bar{B} \ot_B -$. 
Define the modules $L^{-i}$ and $\bar{L}^{-i}$ to be the submodules of 
$P^{-i}$ and $\bar{P}^{-i}$ respectively that make the sequences 
\begin{equation} \label{eqn:57}
0 \to L^{-i} \to P^{-i} \xar{\d^{-i + 1}} P^{-i + 1} \to \cdots \xar{\d^0} 
P^{0} \xar{\eta} M \to 0 
\end{equation}
and 
\begin{equation} \label{eqn:204} 
0 \to \bar{L}^{-i} \to 
\bar{P}^{-i} \xar{\bar{\d}^{-i + 1}} \bar{P}^{-i + 1} \to \cdots 
\xar{\bar{\d}^0} \bar{P}^{0} \xar{\bar{\eta}} \bar{M} \to 0  
\end{equation}
exact. There is a homomorphism 
$\phi : L^{-i} \to \bar{L}^{-i}$ 
that is induced from the surjection $P^{-i} \to \bar{P}^{-i}$.

We are given that the $B$-modules $M, P^0, \ldots, P^{-i}$ satisfy 
$\opn{Tor}^{B}_j(\bar{B}, -) = 0$
for every $j > 0$. By the usual syzygy argument this is also satisfied by 
$L^{-i}$. Therefore the sequence gotten 
by applying $\bar{B} \ot_B - $ to the exact sequence (\ref{eqn:57}) remains 
exact, and thus it is isomorphic to the sequence (\ref{eqn:204}). 
This implies that the homomorphism 
$\bar{B} \ot_B L^{-i} \to \bar{L}^{-i}$ is bijective; and hence 
$\phi : L^{-i} \to \bar{L}^{-i}$ is surjective. 
As before, using the Nakayama Lemma, we can lift the given surjection 
\[ \bar{\d}^{-i} : \bar{P}^{-i - 1} = \opn{F}_{\mrm{fin}}(Z_{i + 1}, \bar{B}) 
\to \bar{L}^{-i} \]
to a surjection
\[ \d^{-i} : P^{-i - 1} = \opn{F}_{\mrm{fin}}(Z_{i + 1}, B) \to L^{-i} .  \]
\end{proof}

\begin{thm} \label{thm:164}
Let $A$ be a commutative ring, let $\a$ be an ideal in $A$, and let 
$\bsym{M} = \{ M_k \}_{k \in \N}$ be an $\a$-adic system of 
$A$-modules. 
The following conditions are equivalent\tup{:}
\begin{enumerate}
\rmitem{i} The $\a$-adic system $\bsym{M}$ admits a free resolution. 
\rmitem{ii} For every $k \geq 0$ and $i > 0$, the module 
$\opn{Tor}^{A_{k + 1}}_i(A_{k}, M_{k + 1})$ vanishes.  
\end{enumerate}
\end{thm}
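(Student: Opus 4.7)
The plan is to prove the two implications separately, with (i)~$\Rightarrow$~(ii) being a direct cohomology calculation, and (ii)~$\Rightarrow$~(i) proceeding by induction on $k$, using Lemma~\ref{lem:40} to lift free resolutions one step at a time.

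For (i)~$\Rightarrow$~(ii), suppose $\bsym{\eta} : \bsym{P} \to \bsym{M}$ is a free resolution. By Definition \ref{dfn:20} combined with the isomorphism (\ref{eqn:73}), each transition in $\bsym{P}^{-i}$ induces a canonical isomorphism of $A_k$-modules $A_k \ot_{A_{k+1}} P_{k+1}^{-i} \iso P_k^{-i}$, and these are compatible with the differentials. Hence the complex $A_k \ot_{A_{k+1}} P_{k+1}$ is identified with $P_k$. Since $P_{k+1}$ is a complex of free (in particular flat) $A_{k+1}$-modules resolving $M_{k+1}$, one computes
\[ \opn{Tor}^{A_{k+1}}_i(A_k, M_{k+1}) \cong \DH^{-i}\bigl(A_k \ot_{A_{k+1}} P_{k+1}\bigr) \cong \DH^{-i}(P_k) = 0 \]
for every $i > 0$, because $P_k \to M_k$ is a resolution.

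For (ii)~$\Rightarrow$~(i), I build the resolution $\bsym{P} \to \bsym{M}$ inductively in $k$. For $k = 0$, choose any free resolution $P_0 \to M_0$ of the form $P_0^{-i} = \opn{F}_{\mrm{fin}}(Z_i, A_0)$ for suitable sets $Z_i$. For the inductive step, given $P_k \to M_k$ with $P_k^{-i} = \opn{F}_{\mrm{fin}}(Z_i, A_k)$, I apply Lemma~\ref{lem:40} with $B = A_{k+1}$, $\b = \a^{k+1} A_{k+1}$, $\bar{B} = A_k$, $M = M_{k+1}$, and $\bar{M} = M_k$, where the identification $\bar{B} \ot_B M \cong \bar{M}$ comes from the defining isomorphism of the $\a$-adic system $\bsym{M}$. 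The ideal $\b$ is nilpotent since $\b^2 \subseteq \a^{2k+2} A_{k+1} = 0$, and the hypothesis $\opn{Tor}^B_i(\bar{B}, M) = 0$ of the lemma is precisely condition (ii). The lemma produces a free resolution $P_{k+1} \to M_{k+1}$ with $P_{k+1}^{-i} = \opn{F}_{\mrm{fin}}(Z_i, A_{k+1})$ such that applying $A_k \ot_{A_{k+1}} -$ recovers the previously constructed resolution $P_k \to M_k$. In particular, the transitions $P_{k+1}^{-i} \to P_k^{-i}$ are the natural surjections induced by $A_{k+1} \to A_k$, so each $\bsym{P}^{-i} = \{P_k^{-i}\}_{k \in \N}$ is a free $\a$-adic system in the sense of Definition \ref{dfn:20}, and $\bsym{\eta} : \bsym{P} \to \bsym{M}$ is the desired free resolution.

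The main obstacle is the inductive lifting step, but it is fully packaged in Lemma~\ref{lem:40}, whose Nakayama-based syzygy argument handles the heavy lifting. What remains to verify is only the nilpotence of $\a^{k+1}$ in $A_{k+1}$ and the identification of the Tor hypothesis with (ii), both of which are immediate. Since the lemma delivers the lifted resolution with matching index sets $Z_i$ and compatibility with $\bar{B} \ot_B -$, the coherence of the transitions across all $k$ is automatic.
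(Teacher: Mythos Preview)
Your proof is correct and follows essentially the same approach as the paper: the direction (i)~$\Rightarrow$~(ii) is the same Tor computation via the identification $A_k \ot_{A_{k+1}} P_{k+1} \cong P_k$, and (ii)~$\Rightarrow$~(i) is the same induction on $k$ invoking Lemma~\ref{lem:40} with $B = A_{k+1}$ and $\bar{B} = A_k$. You have even spelled out a couple of details (the nilpotence check $\b^2 = 0$ and the explicit identification of the transitions) that the paper leaves implicit.
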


\begin{proof}
(i) $\Rightarrow$ (ii): Assume $\bsym{M}$ admits a free 
resolution $\bsym{\eta} : \bsym{P} \to \bsym{M}$. We can calculate 
$\opn{Tor}^{A_{k + 1}}_i(A_{k}, M_{k + 1})$
using the free resolution 
$\eta_{k + 1} : P_{k + 1} \to M_{k + 1}$;
see formula (\ref{eqn:320}). Now the complex
$A_k \ot_{A_{k + 1}} P_{k + 1}$ 
is isomorphic to the complex $P_k$, and therefore 
\[ \opn{H}^{-i}(A_k \ot_{A_{k + 1}} P_{k + 1}) = 0 \]
for all $i > 0$. 

\medskip \noindent
(ii) $\Rightarrow$ (i): 
The proof is by induction on $k$. For $k = 0$ we choose a free resolution 
\[ \cdots \to P_0^{-2} \xar{\d^{-1}_0} P_0^{-1} \xar{\d^0_0} P_0^{0} 
\xar{\eta_0} M_0 \to 0 \]
of the $A_0$-module $M_0$,
where 
$P^{-i}_0 = \opn{F}_{\mrm{fin}}(Z_i, A_0)$ for some sets $Z_i$.
 
Now assume that for $k \geq 0$ we found a free resolution of the truncated 
$\a$-adic system $\{ M_{k'} \}_{ 0 \leq k' \leq k}$. Let 
$B := A_{k + 1}$ and $\bar{B} := A_{k}$. Consider the free resolution 
\[ \cdots \to P_k^{-2} \xar{\d^{-1}_k} P_k^{-1} \xar{\d^0_k} P_k^{0} 
\xar{\eta_k} M_k \to 0 \]
over the ring $\bar{B}$. By Lemma \ref{lem:40} this resolution can be lifted to 
a free resolution of $M_{k + 1}$ over ring $B$, with the same indexing sets 
$Z_{0},  Z_{1}, \ldots$. 
\end{proof}

The theorem immediately implies:

\begin{cor} \label{cor:290}
If $\bsym{M}$ is a flat $\a$-adic system, then it has a free resolution. 
\end{cor}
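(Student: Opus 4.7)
The plan is to verify condition (ii) of Theorem \ref{thm:164} and then invoke that theorem to conclude that $\bsym{M}$ admits a free resolution.

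By definition, $\bsym{M} = \{ M_k \}_{k \in \N}$ being a flat $\a$-adic system means that each $M_k$ is a flat $A_k$-module. In particular, for each $k \geq 0$, the module $M_{k+1}$ is flat over the ring $A_{k+1}$. By the very definition of flatness, this gives the vanishing
\[ \opn{Tor}^{A_{k+1}}_i(N, M_{k+1}) = 0 \]
for every $A_{k+1}$-module $N$ and every $i > 0$. Specializing to $N = A_k$, which is naturally an $A_{k+1}$-module via the surjection $A_{k+1} \to A_k$, we obtain
\[ \opn{Tor}^{A_{k+1}}_i(A_k, M_{k+1}) = 0 \]
for all $k \geq 0$ and $i > 0$. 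This is exactly condition (ii) of Theorem \ref{thm:164}.

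Applying the implication (ii) $\Rightarrow$ (i) of Theorem \ref{thm:164}, we conclude that $\bsym{M}$ admits a free resolution. There is no real obstacle here: the corollary is a direct reading of the previous theorem against the definition of flatness, which is why the excerpt introduces it with the phrase \emph{immediately implies}.
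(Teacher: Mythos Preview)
Your proof is correct and matches the paper's intent exactly: the paper does not even write out a proof, merely noting that Theorem \ref{thm:164} ``immediately implies'' the corollary, which is precisely the verification of condition (ii) via flatness that you carry out.
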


\section{\texorpdfstring{$\a$}{a}-Adic Flatness}
\label{sec:adic-flatness} 

In this section we prove Theorem \ref{thm:168}, that 
characterizes adically flat modules.
As before, $A$ is a commutative ring and $\a$ is an ideal in it. 
We denote by $\cat{D}(A) = \cat{D}(\cat{Mod} A)$ the unbounded derived category 
of $A$. 

Let us first recall the notion of $\a$-torsion. An element $m$ in an $A$-module 
$M$ is called an {\em $\a$-torsion element} if $m$ is annihilated by some power 
of the ideal $\a$. The {\em $\a$-torsion submodule} $\Ga_{\a}(M)$ of $M$ is the 
submodule consisting of all $\a$-torsion elements. Thus 
\begin{equation} \label{eqn:210}
\Ga_{\a}(M) = \lim_{k \to} \, \opn{Hom}_A(A_k, M) \, ,
\end{equation}
where $A_k = A / \a^{k + 1}$ as before.
The module $M$ is called an {\em $\a$-torsion module} if $\Ga_{\a}(M) = M$.

It might be good to mention that if the ideal $\a$ is nilpotent, then all 
$A$-modules are both $\a$-torsion and $\a$-adically complete. Thus the 
discussion is only interesting when the ideal $\a$ is not nilpotent. 

\begin{dfn} \label{dfn:210}
Let $A$ be a commutative ring, let $\a$ be an ideal in $A$, and 
let $M$ be an $A$-module. We say that $M$ is {\em $\a$-adically flat} if 
$\opn{Tor}^{A}_i(N, M) = 0$
for every $\a$-torsion $A$-module $N$ and every $i > 0$. 
\end{dfn}

Here is a useful characterization of adic flatness. It holds in general -- no 
finiteness or completeness assumptions are needed on $A$, $\a$ or $M$. 

\begin{thm} \label{thm:168}
Let $A$ be a commutative ring, let $\a$ be an ideal in $A$, and 
let $M$ be an $A$-module. The following three conditions are equivalent\tup{:}
\begin{enumerate}
\rmitem{i} The $A$-module $M$ is $\a$-adically flat.

\rmitem{ii} For every $i > 0$ and $k \geq 0$ the module 
$\opn{Tor}^{A}_i(A_{k}, M)$ vanishes, and   
$A_{k} \ot_A M$ is a flat $A_k$-module.

\rmitem{iii} For every $i > 0$ the module 
$\opn{Tor}^{A}_i(A_{0}, M)$ vanishes, and   
$A_{0} \ot_A M$ is a flat $A_0$-module.
\end{enumerate}
\end{thm}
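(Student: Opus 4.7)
The plan is to run the cycle (i) $\Rightarrow$ (ii) $\Rightarrow$ (iii) $\Rightarrow$ (ii) $\Rightarrow$ (i), where (ii) $\Rightarrow$ (iii) is the trivial specialization to $k = 0$. The substance lies in (i) $\Rightarrow$ (ii), the main step (iii) $\Rightarrow$ (ii), and a filtered-colimit argument for (ii) $\Rightarrow$ (i).

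For (i) $\Rightarrow$ (ii), observe that $A_k$ is annihilated by $\a^{k + 1}$ and is therefore an $\a$-torsion module, so (i) immediately gives $\opn{Tor}^A_i(A_k, M) = 0$ for $i > 0$; equivalently $A_k \ot^{\mrm{L}}_A M \simeq A_k \ot_A M$ in $\cat{D}(A_k)$. For any $A_k$-module $N$, which is again $\a$-torsion, the base-change isomorphism $N \ot^{\mrm{L}}_{A_k} (A_k \ot_A M) \simeq N \ot^{\mrm{L}}_A M$ combined with (i) forces the left-hand side to be concentrated in degree $0$, which is exactly $A_k$-flatness of $A_k \ot_A M$.

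For (iii) $\Rightarrow$ (ii), set $K := A_k \ot^{\mrm{L}}_A M$, a complex over $A_k$ with cohomology in nonpositive degrees. Associativity of the derived tensor yields $A_0 \ot^{\mrm{L}}_{A_k} K \simeq A_0 \ot^{\mrm{L}}_A M$, and by (iii) the right-hand side is concentrated in degree $0$ and flat over $A_0$. The ideal $\bar{\a} := \a \cdot A_k$ is nilpotent of index $\leq k + 1$, so every $A_k$-module $N$ admits the finite filtration $N \supset \bar{\a} N \supset \cdots \supset \bar{\a}^{k+1} N = 0$ whose subquotients are $A_0$-modules. For each such subquotient, change of rings gives
\[ (\bar{\a}^i N / \bar{\a}^{i+1} N) \ot^{\mrm{L}}_{A_k} K \simeq (\bar{\a}^i N / \bar{\a}^{i+1} N) \ot^{\mrm{L}}_{A_0} (A_0 \ot_A M) , \]
which is concentrated in degree $0$ by $A_0$-flatness of $A_0 \ot_A M$. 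A finite induction along the short exact sequences of the filtration then shows that $N \ot^{\mrm{L}}_{A_k} K$ is concentrated in degree $0$ for every $A_k$-module $N$. Taking $N = A_k$ proves $\opn{Tor}^A_i(A_k, M) = 0$ for $i > 0$, and letting $N$ vary proves that $K = A_k \ot_A M$ is flat over $A_k$.

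For (ii) $\Rightarrow$ (i), any $\a$-torsion module $N$ is the filtered union $N = \bigcup_k N_k$ of its $A_k$-submodules $N_k := \{ n \in N \mid \a^{k+1} n = 0 \}$. Since $\opn{Tor}$ commutes with filtered colimits in the first argument, it suffices to show $\opn{Tor}^A_i(N_k, M) = 0$. Using (ii) and base change, $N_k \ot^{\mrm{L}}_A M \simeq N_k \ot^{\mrm{L}}_{A_k} (A_k \ot_A M)$, which is concentrated in degree $0$ by the established flatness of $A_k \ot_A M$ over $A_k$. The sole real obstacle is the filtration-plus-change-of-rings step in (iii) $\Rightarrow$ (ii), a derived Nakayama argument exploiting the nilpotence of $\bar{\a}$; once that is set up, the remaining implications are formal manipulations with the derived tensor product.
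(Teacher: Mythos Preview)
Your proof is correct and follows essentially the same approach as the paper: derived base change along $A \to A_k$, the filtration by powers of $\a$ to reduce to $A_0$-modules, and the filtered-colimit reduction for (ii) $\Rightarrow$ (i). The only organizational difference is that for (iii) $\Rightarrow$ (ii) the paper inducts on $k$ using a one-step filtration $0 \subseteq \a N \subseteq N$ at each stage, whereas you fix $k$ and use the full $\bar{\a}$-adic filtration of an $A_k$-module to reduce directly to $A_0$; your packaging is slightly more streamlined but the content is identical.
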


\begin{proof}
In terms of derived functors we have 
\[ \opn{Tor}^{A}_i(N, M) = \opn{H}^{-i}(N \ot_{A}^{\mrm{L}} M) \]
as $A$-modules.  Thus $M$ is $\a$-adically flat if and only if for every 
$\a$-torsion $A$-module $N$, and every $i > 0$, we have 
$\opn{H}^{-i}(N \ot_{A}^{\mrm{L}} M) = 0$.
This is equivalent to the condition that the canonical morphism 
\[ N \ot_A^{\mrm{L}} M \to N \ot_A M \]
in $\cat{D}(A)$ is an isomorphism.
Let us write $M_k := A_k \ot_A M$. 

\medskip \noindent 
(i) $\Rightarrow$ (ii): Since $A_k$ itself is an $\a$-torsion $A$-module, the 
modules $\opn{Tor}^{A}_i(A_{k}, M)$ vanish for $i > 0$. 
Thus there is an isomorphism 
$M_k \cong A_k \ot_A^{\mrm{L}} M$ in $\cat{D}(A_k)$.

It remains to prove that $M_k$ is flat over $A_k$. 
Take some $A_k$-module $N$. 
By the associativity of derived tensor products we have isomorphisms 
\[ N \ot_{A_k}^{\mrm{L}} M_k \cong 
N \ot_{A_k}^{\mrm{L}} A_k \ot_{A}^{\mrm{L}} M \cong
N \ot_{A}^{\mrm{L}} M  \]
in $\cat{D}(A_k)$. But $N$ is an $\a$-torsion module, so by our assumption we 
have \lb 
$\opn{H}^{-i}(N \ot_{A}^{\mrm{L}} M) = 0$ for all $i > 0$. 
Therefore 
$\opn{H}^{-i}(N \ot_{A_k}^{\mrm{L}} M_k) = 0$ for all $i > 0$.

\medskip \noindent 
(ii) $\Rightarrow$ (i): Since for every $i$ the functor 
$\opn{Tor}^{A}_i(-, M)$
commutes with arbitrary direct limits, it suffices to check the vanishing of
$\opn{Tor}^{A}_i(N, M) = 0$
when $N$ is a finitely generated $\a$-torsion $A$-module. But then $N$ is an 
$A_k$-module for some $k$. By assumption $M_k$ is flat over $A_k$, and 
$A_k \ot_A^{\mrm{L}} M \cong M_k$ in $\cat{D}(A)$. 
Then there are isomorphisms 
\[ N \ot_{A}^{\mrm{L}} M \cong 
N \ot_{A_k}^{\mrm{L}} A_k \ot_{A}^{\mrm{L}} M \cong 
N \ot_{A_k}^{\mrm{L}} M_k \cong N \ot_{A_k} M_k  \]
in $\cat{D}(A)$. It follows that 
$\opn{H}^{-i}(N \ot_{A} M) = 0$ for all $i > 0$. 

\medskip \noindent 
(ii) $\Rightarrow$ (iii): This is trivial. 

\medskip \noindent 
(iii) $\Rightarrow$ (ii): We will use induction on $k$: 
assuming that (ii) holds for $k$, we will prove it for $k + 1$. 
For $k = 0$ this is given. 

Let $N$ be an $A_{k + 1}$-module. Define $N' := \a \cd N \subseteq N$ and
$N'' := N / N'$.  We get an exact sequence 
\[ 0 \to N' \to N \to N'' \to 0 \]
of $A_{k + 1}$-modules, that we view as a distinguished triangle
\begin{equation} \label{eqn:211}
N' \to N \to N'' \xar{\vartriangle} 
\end{equation}
in $\cat{D}(A_{})$. 
Now the modules $N'$ and $N''$ are annihilated by $\a^{k + 1}$, so they are in 
fact $A_k$-modules. By our assumption 
$A_k \ot_{A}^{\mrm{L}} M \cong M_k$ in $\cat{D}(A_{k})$, and $M_k$ is a flat 
$A_k$-module. So we have isomorphisms
\begin{equation} \label{eqn:220}
N' \ot_{A}^{\mrm{L}} M \cong 
N' \ot_{A_k}^{\mrm{L}} A_k \ot_{A}^{\mrm{L}} M \cong 
N' \ot_{A_k}^{\mrm{L}} M_k \cong N' \ot_{A_k} M_k 
\end{equation}
in $\cat{D}(A_{k})$. The same for $N''$. Thus, upon application of the 
functor 
$- \ot_{A}^{\mrm{L}} M$
to the distinguished triangle (\ref{eqn:211}), we obtain the 
distinguished triangle
\begin{equation} \label{eqn:212}
N' \ot_{A_k} M_k \to N \ot_{A}^{\mrm{L}} M 
\to N'' \ot_{A_k} M_k \xar{\vartriangle}  
\end{equation}
in $\cat{D}(A)$.
We conclude that 
$\opn{H}^{-i}(N \ot_{A}^{\mrm{L}} M) = 0$
for all $i > 0$. In particular, taking $N = A_{k + 1}$, this tells us that 
$\opn{H}^{-i}(A_{k + 1} \ot_{A}^{\mrm{L}} M) = 0$
for all $i > 0$, and that \lb 
$A_{k + 1} \ot_{A}^{\mrm{L}} M \cong M_{k + 1}$
in  $\cat{D}(A_{k + 1})$.

It remains to prove that $M_{k + 1}$ is flat over $A_{k + 1}$. Again we take an 
arbitrary \lb $A_{k + 1}$-module $N$, and examine the distinguished triangle 
(\ref{eqn:211}), but now it is in the category $\cat{D}(A_{k + 1})$. Using the 
isomorphisms from (\ref{eqn:220}) we obtain these isomorphisms
\[ N' \ot_{A_{k + 1}}^{\mrm{L}} M_{k + 1} \cong
N' \ot_{A_{k + 1}}^{\mrm{L}} A_{k + 1} \ot_{A_{}}^{\mrm{L}} M_{} \cong
N' \ot_{A_{}}^{\mrm{L}} M_{} \cong N' \ot_{A_k} M_k \]
in $\cat{D}(A_{k + 1})$. The same for $N''$.
Therefore, when we apply the functor 
$- \ot_{A_{k + 1}}^{\mrm{L}} M_{k + 1}$
to (\ref{eqn:211}), we obtain the distinguished triangle 
\[ N' \ot_{A_k} M_k \to N \ot_{A_{k + 1}}^{\mrm{L}} M_{k + 1}
\to N'' \ot_{A_k} M_k \xar{\vartriangle} \]
$\cat{D}(A_{k + 1})$.
We see that 
$\opn{H}^{-i}(N \ot_{A_{k + 1}}^{\mrm{L}} M_{k + 1}) = 0$
for all $i > 0$. 
\end{proof}

\begin{rem} \label{rem:310}
Theorem \ref{thm:168} is similar to \cite[Lemma 051C]{SP}. The latter says that
if \lb $\opn{Tor}^{A}_1(A_0, M) = 0$ and $M_0$ is flat over $A_0$, then 
$M_k$ is flat over $A_k$ for all $k$, and 
$\opn{Tor}^{A}_1(N, M) = 0$
for every $\a$-torsion $A$-module $N$.
It seems that this condition on $M$ is strictly weaker than being $\a$-adically 
flat; but we did not look for an example demonstrating this.
\end{rem}

\begin{prop} \label{prop:250}
Let $A$ be a commutative ring, let $\a$ be an ideal in $A$, and 
let $M$ be an $\a$-adically flat $A$-module. Then the canonical morphism 
$\mrm{L} \La_{\a}(M) \to \La_{\a}(M)$
in $\cat{D}(A)$ is an isomorphism. 
\end{prop}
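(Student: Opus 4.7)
The plan is to compute $\mrm{L}\La_{\a}(M)$ by means of a free (hence K-flat) resolution $P \to M$ in $\cat{Mod} \, A$, and then check that the induced map $\La_{\a}(P) \to \La_{\a}(M)$ is a quasi-isomorphism. This map represents the canonical morphism $\mrm{L}\La_{\a}(M) \to \La_{\a}(M)$ in $\cat{D}(A)$, so proving this quasi-isomorphism gives the proposition.

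First I would analyze each level $k \geq 0$. The complex $A_k \ot_A P$ computes $\opn{Tor}^A_\bullet(A_k, M)$, and Theorem \ref{thm:168} (applied to the $\a$-adic flatness of $M$) forces $\opn{Tor}^A_i(A_k, M) = 0$ for every $i > 0$. Hence the augmentation $A_k \ot_A P \to M_k := A_k \ot_A M$ is a quasi-isomorphism of $A$-complexes, for every $k$.

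Next I would pass to the inverse limit over $k$. Since $A_{k + 1} \surj A_k$, the transitions $A_{k + 1} \ot_A P^{-i} \to A_k \ot_A P^{-i}$ are surjective in every degree; and the transitions $M_{k + 1} \to M_k$ of the induced $\a$-adic system are surjective as well. So both inverse systems are termwise Mittag-Leffler, and their derived inverse limits coincide with ordinary inverse limits in $\cat{D}(A)$. The Milnor short exact sequence
\[ 0 \to \mrm{R}^1 \lim_{\leftarrow k} \, H^{n - 1}(A_k \ot_A P) \to H^n(\lim_{\leftarrow k} \, A_k \ot_A P) \to \lim_{\leftarrow k} \, H^n(A_k \ot_A P) \to 0 \]
therefore collapses in every degree, because the relevant $\mrm{R}^1 \lim$ vanishes (either the system is zero, or its transitions are surjective by the previous observation applied at the level of cohomology). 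Plugging in the values computed in the first step — zero in nonzero degrees, and $M_k$ in degree $0$ — yields $H^n(\La_{\a}(P)) = 0$ for $n \neq 0$ and $H^0(\La_{\a}(P)) = \lim_{\leftarrow k} \, M_k = \La_{\a}(M)$. This is precisely the desired quasi-isomorphism.

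The only delicate point, and the one I expect to be the main obstacle, is the interchange of cohomology with inverse limit. It rests on the vanishing of all the pertinent $\mrm{R}^1 \lim$ terms, which in turn follows from surjectivity of the transition maps in both inverse systems — a direct consequence of the surjections $A_{k + 1} \to A_k$. Once this bookkeeping is in place, the proposition reduces to a single invocation of Theorem \ref{thm:168}.
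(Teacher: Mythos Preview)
Your proposal is correct and follows essentially the same approach as the paper: choose a flat (free) resolution $P \to M$, use $\a$-adic flatness to show each sequence $A_k \ot_A P \to M_k$ is exact, and pass to the inverse limit via a Mittag-Leffler argument. Two cosmetic differences: the Tor vanishing $\opn{Tor}^A_i(A_k, M) = 0$ follows directly from the definition of $\a$-adic flatness (since $A_k$ is $\a$-torsion), so invoking Theorem~\ref{thm:168} is unnecessary; and the paper phrases the limit step as ``the inverse limit of the exact sequences (\ref{eqn:251}) is exact by Mittag-Leffler'' rather than via the Milnor $\mrm{R}^1\lim$ sequence, but these are the same argument.
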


\begin{proof}
Let
\begin{equation} \label{eqn:280}
\cdots P^{-1} \to P^0 \xar{\eta} M \to 0 
\end{equation}
be a flat resolution of the module $M$. We can view this as a quasi-isomorphism 
of complexes $\eta : P \to M$.
Then the completion 
$\La_{\a}(\eta) : \La_{\a}(P) \to \La_{\a}(M)$
represents the canonical morphism 
$\mrm{L} \La_{\a}(M) \to \La_{\a}(M)$. 
We are going to prove that $\La_{\a}(\eta)$ is a quasi-isomorphism. 

For every $k \in \N$, applying $A_k \ot_A -$ to (\ref{eqn:280}), there is a 
sequence 
\begin{equation} \label{eqn:251}
 \cdots A_k \ot_A P^{-1} \to A_k \ot_A P^0 \xar{\eta_k} A_k \ot_A M \to 0 .
\end{equation}
The complex 
$A_k \ot_A P$ represents $A_k \ot_{A}^{\mrm{L}} M$.
Because $M$ is $\a$-adically flat, we see that 
$\opn{H}^{-i}(A_k \ot_A P) = 0$ for all $i > 0$. Therefore the sequence 
(\ref{eqn:251}) is exact. The Mittag-Leffler argument (see 
\cite[Proposition 1.12.4]{KS}) says 
that the inverse limit of the sequences (\ref{eqn:251}) is also exact.
But then 
$\La_{\a}(\eta) : \La_{\a}(P) \to \La_{\a}(M)$
is a quasi-isomorphism. 
\end{proof}

\begin{rem} \label{rem:500}
Let $\a$ be a finitely generated ideal in a commutative ring $A$.
Our $\a$-adically flat modules are very close to the {\em flat contramodules} 
that were introduced in \cite{Po1}, \cite{PoRo}. 
According to Positselski, an $A$-module $M$ is called a contramodule if it 
is $\a$-adically cohomologically complete in the sense of \cite{PSY1}, namely if 
the canonical morphism $M \to \mrm{L} \La_{\a}(M)$ in $\cat{D}(A)$ is an 
isomorphism. A contramodule $M$ is called a flat contramodule if each 
$M_k := A_k \ot_A M$ 
is a flat $A_k$-module. Thus, by our Theorem \ref{thm:262}(1) and Proposition 
\ref{prop:250}, if $M$ is an $\a$-adically complete $A$-module and the ideal 
$\a$ is weakly proregular, then $M$ is an $\a$-adically flat module iff it is a 
flat contramodule.
We thank L. Positselski for mentioning this to us. 
\end{rem}

\begin{rem} \label{rem:501}
Let $\a$ be a finitely generated ideal in a commutative ring $A$.
Denote by $\cat{Mod}_{\a\tup{-tor}} A$ the subcategory of $\a$-torsion 
modules. Let $M$ be an $\a$-adically complete $A$-module.
It is easy to see that if $M$ is $\a$-adically flat, then $M$ is {\em flat 
relative to the subcategory $\cat{Mod}_{\a\tup{-tor}} A$}, i.e.\ the functor 
\begin{equation} \label{eqn:500}
M \ot_A - : \cat{Mod}_{\a\tup{-tor}} A \to 
\cat{Mod}_{\a\tup{-tor}} A 
\end{equation}
is exact. The converse is true if the ideal $\a$ is weakly proregular,
by Theorem \ref{thm:262}(1). But what if $\a$ is not weakly proregular? 

Surprisingly, the converse does not hold without weak proregularity. 
Here is the precise statement, communicated to us privately by Positselski (and 
can be proved using ideas in his paper \cite{Po2}). 
Let $P$ be the $\a$-adic completion of the free $A$-module of countable rank; 
so in the notation of Section \ref{sec:decay} we have 
$P = \opn{F}_{\mrm{dec}}(\N, \what{A})$.
Since $A_k \ot_A P$ is a free $A_k$-module (cf.\ Proposition \ref{prop:500}), it 
follows that the $A$-module $P$ 
is flat relative to the subcategory $\cat{Mod}_{\a\tup{-tor}} A$. 
However, Positselski claims that {\em if the $A$-module $P$ is $\a$-adically 
flat, then the ideal $\a$ is weakly proregular}. 
\end{rem}

\section{Weakly Proregular Ideals}
\label{sec:WPR} 

We begin this section by recalling some facts about {\em weak proregularity}, 
copied from \cite{PSY1}. (Partial results were obtained earlier in \cite{LC}, 
\cite{AJL} and \cite{Scz}.)  
Suppose $\bsym{a} = (a_1, \ldots, a_n)$ is a finite sequence of elements in a 
commutative ring $A$. For each $k \geq 1$ we have the sequence of elements 
$\bsym{a}^k := (a_1^k, \ldots, a_n^k)$, 
and the corresponding Koszul complex $\opn{K}(A; \bsym{a}^k)$.
Recall that $\opn{K}(A; \bsym{a}^k)$ is a complex of finite rank free 
$A$-modules, concentrated in degrees $-n, \ldots, 0$. The Koszul complexes 
form an inverse system 
$\bigl\{ \opn{K}(A; \bsym{a}^k) \bigr\}_{k \in \N}$.
In cohomology we get an inverse system 
$\bigl\{ \opn{H}(\opn{K}(A; \bsym{a}^k)) \bigr\}_{k \in \N}$
of graded $A$-modules. 
(Actually, each $\opn{K}(A; \bsym{a}^k)$ is a commutative DG ring, 
$\bigl\{ \opn{K}(A; \bsym{a}^k) \bigr\}_{k \in \N}$
is an inverse system of DG rings, and 
$\bigl\{ \opn{H}(\opn{K}(A; \bsym{a}^k)) \bigr\}_{k \in \N}$
is an inverse system of graded rings.)
In degree $0$ we have 
\[ \bigl\{ \opn{H}^0(\opn{K}(A; \bsym{a}^k)) \bigr\}_{k \in \N}
= \{ A_k \}_{k \in \N} , \]
so the limit is $\what{A}$. The sequence $\bsym{a}$ is 
called {\em weakly proregular} if for every $i < 0$ the inverse system 
$\bigl\{ \opn{H}^i(\opn{K}(A; \bsym{a}^k)) \bigr\}_{k \in \N}$
is pro-zero; namely for every $k$ there is a $k' \geq k$ such that the 
homomorphism 
$\opn{H}^i(\opn{K}(A; \bsym{a}^{k'})) \to 
\opn{H}^i(\opn{K}(A; \bsym{a}^k))$
is zero. 

The ideal $\a$ in $A$ is called a {\em weakly proregular ideal} if it is 
generated by some weakly proregular sequence. It is known that when $A$ is 
noetherian, every finite sequence is weakly proregular; and hence every ideal 
in $A$ is weakly proregular. It is not hard to see that if $A \to B$ is a flat 
ring homomorphism, and $\a \subseteq A$ is a weakly proregular ideal, then the 
ideal $\b := B \cd \a \subseteq B$ is weakly proregular. It is known that if 
the ideal $\a$ is weakly proregular, then every finite sequence that generates 
$\a$ is weakly proregular. 

Given a finite sequence of elements $\bsym{a}$, the dual Koszul complex is
\[ \opn{K}^{\vee}(A; \bsym{a}^k) := 
\opn{Hom}_A \bigl( \opn{K}(A; \bsym{a}^k), A \bigr). \]
These complexes form a direct system, and in the  limit we have 
the {\em infinite dual Koszul complex}
\[ \opn{K}_{\infty}^{\vee}(A; \bsym{a}) := \lim_{k \to} \,
\opn{K}^{\vee}(A; \bsym{a}^k) . \]
This is a bounded complex of flat $A$-modules, concentrated in degrees 
$0, \ldots, n$. The complex 
$\opn{K}_{\infty}^{\vee}(A; \bsym{a})$ can be described quite easily.
For $n = 1$, and writing $a := a_1$, it is the complex 
\[ \opn{K}_{\infty}^{\vee}(A; a) = 
\bigl( \cdots \to 0 \to A \xar{\ \d \ } A[a^{-1}] \to 0 \to \cdots \bigr)  . \]
Here $A$ is in degree $0$, the localized ring $A[a^{-1}]$ is in degree $1$, and 
$\d$ is the ring homomorphism. For $n \geq 2$ we have 
\[ \opn{K}_{\infty}^{\vee}(A; \bsym{a}) = 
\opn{K}_{\infty}^{\vee}(A; a_1) \ot_A \cdots 
\ot_A \opn{K}_{\infty}^{\vee}(A; a_n) . \]

The infinite dual Koszul complex affords a more meaningful characterization
of weak proregularity. Let $\bsym{a}$ be a finite sequence in $A$, and let $\a$ 
the ideal it generates. It is known that $\bsym{a}$ is weakly 
proregular iff for every injective $A$-module $I$, the canonical homomorphism 
\[ \Ga_{\a}(I) \to \opn{K}_{\infty}^{\vee}(A; \bsym{a}) \ot_A I \]
is a quasi-isomorphism. 
This implies that when $\bsym{a}$ (and thus $\a$) is weakly 
proregular, for every complex $M \in \cat{D}(A)$ the is an isomorphism
\begin{equation} \label{eqn:250}
\opn{K}_{\infty}^{\vee}(A; \bsym{a}) \ot_A M \cong 
\mrm{R} \Ga_{\a}(M) 
\end{equation}
in $\cat{D}(A)$. Furthermore, this isomorphism is functorial in $M$. 

When the ideal $\a$ is weakly proregular, the derived functors
\[ \mrm{R} \Ga_{\a}, \mrm{L} \La_{\a} : \cat{D}(A) \to \cat{D}(A) \]
are adjoints to each other. Moreover, letting 
$\cat{D}(A)_{\a\tup{-tor}}$ and $\cat{D}(A)_{\a\tup{-com}}$
be the essential images of $\mrm{R} \Ga_{\a}$ and $\mrm{L} \La_{\a}$, 
respectively, the functor 
\[ \mrm{R} \Ga_{\a} : \cat{D}(A)_{\a\tup{-com}} \to 
\cat{D}(A)_{\a\tup{-tor}} \]
is an equivalence, with quasi-inverse $\mrm{L} \La_{\a}$. This is the {\em MGM 
equivalence} from \cite{PSY1}. 

\begin{lem} \label{lem:220}
Assume $\a$ is weakly proregular. Then for every $M, N \in \cat{D}(A)$ there is 
an isomorphism 
\[ \mrm{R} \Ga_{\a}(M) \ot_{A}^{\mrm{L}} \mrm{R} \Ga_{\a}(N) \cong 
\mrm{R} \Ga_{\a}(M \ot_{A}^{\mrm{L}} N) \]
in $\cat{D}(A)$. Moreover, this isomorphism is functorial in $M$ and $N$. 
\end{lem}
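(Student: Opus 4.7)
The plan is to exploit the explicit model
\[ \mrm{R} \Ga_{\a}(L) \cong \opn{K}_{\infty}^{\vee}(A; \bsym{a}) \ot_A L \]
from \tup{(\ref{eqn:250})}, valid under weak proregularity, which represents $\mrm{R} \Ga_{\a}$ as the operation of tensoring with the fixed bounded complex of flat $A$-modules $\opn{K}_{\infty}^{\vee}(A; \bsym{a})$. Since that complex is K-flat, the underived tensor product with it already computes the derived one.

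I would first apply this representation to both $M$ and $N$, and then use associativity and symmetry of $\ot_A^{\mrm{L}}$ to rewrite
\[ \mrm{R} \Ga_{\a}(M) \ot_{A}^{\mrm{L}} \mrm{R} \Ga_{\a}(N) \cong \opn{K}_{\infty}^{\vee}(A; \bsym{a}) \ot_A^{\mrm{L}} \opn{K}_{\infty}^{\vee}(A; \bsym{a}) \ot_A^{\mrm{L}} M \ot_A^{\mrm{L}} N . \]
The proof then reduces to identifying $\opn{K}_{\infty}^{\vee}(A; \bsym{a}) \ot_A^{\mrm{L}} \opn{K}_{\infty}^{\vee}(A; \bsym{a})$ with $\opn{K}_{\infty}^{\vee}(A; \bsym{a})$. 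Using \tup{(\ref{eqn:250})} once more, this tensor product equals $\mrm{R} \Ga_{\a} \bigl( \mrm{R} \Ga_{\a}(A) \bigr)$. Since $\mrm{R} \Ga_{\a}(A)$ lies in the essential image $\cat{D}(A)_{\a\tup{-tor}}$, and the MGM equivalence recalled above tells us that $\mrm{R} \Ga_{\a}$ restricted to this subcategory is the identity, the idempotence $\mrm{R} \Ga_{\a} \circ \mrm{R} \Ga_{\a} \cong \mrm{R} \Ga_{\a}$ yields the desired identification.

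Combining the two steps, the right-hand side collapses to $\opn{K}_{\infty}^{\vee}(A; \bsym{a}) \ot_A^{\mrm{L}} (M \ot_A^{\mrm{L}} N)$, which by a third application of \tup{(\ref{eqn:250})} is $\mrm{R} \Ga_{\a}(M \ot_A^{\mrm{L}} N)$. Functoriality of the composite isomorphism in $M$ and $N$ is inherited from the functoriality of \tup{(\ref{eqn:250})} and from the bifunctoriality of $\ot_A^{\mrm{L}}$.

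The main obstacle I foresee is the idempotence step: it rests on the assertion, standard under weak proregularity but deserving an explicit citation to \cite{PSY1}, that $\mrm{R} \Ga_{\a}$ acts as the identity on $\cat{D}(A)_{\a\tup{-tor}}$. An alternative, more self-contained route is to prove idempotence directly by showing that the multiplication map $\opn{K}_{\infty}^{\vee}(A; \bsym{a}) \ot_A \opn{K}_{\infty}^{\vee}(A; \bsym{a}) \to \opn{K}_{\infty}^{\vee}(A; \bsym{a})$ coming from the commutative DG algebra structure is a quasi-isomorphism; for $n = 1$ this is immediate from $A[a^{-1}] \ot_A A[a^{-1}] = A[a^{-1}]$, and the general case follows by the K\"unneth-type factorisation $\opn{K}_{\infty}^{\vee}(A; \bsym{a}) = \opn{K}_{\infty}^{\vee}(A; a_1) \ot_A \cdots \ot_A \opn{K}_{\infty}^{\vee}(A; a_n)$ recalled in the text.
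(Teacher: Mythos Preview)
Your proposal is correct and follows essentially the same route as the paper: both reduce the statement to the quasi-isomorphism
\[ \opn{K}_{\infty}^{\vee}(A; \bsym{a}) \ot_A \opn{K}_{\infty}^{\vee}(A; \bsym{a}) \to \opn{K}_{\infty}^{\vee}(A; \bsym{a}) . \]
The paper simply cites \cite[Lemma 4.29]{PSY1} for this fact (which is exactly your ``alternative, more self-contained route'' via the DG algebra multiplication and the factorisation over the single-element complexes), rather than deducing it from the idempotence of $\mrm{R}\Ga_{\a}$ on $\cat{D}(A)_{\a\tup{-tor}}$ as in your first argument.
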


\begin{proof}
Let $\bsym{a}$ be a finite sequence that generates the ideal $\a$. Choose 
K-flat resolutions $P \to M$ and $Q \to N$. Then 
\[ \mrm{R} \Ga_{\a}(M) \ot_{A}^{\mrm{L}} \mrm{R} \Ga_{\a}(N) \cong 
\bigl( \opn{K}_{\infty}^{\vee}(A; \bsym{a}) \ot_A P \bigr) 
\ot_A \bigl( \opn{K}_{\infty}^{\vee}(A; \bsym{a}) \ot_A Q \bigr) \]
and 
\[ \mrm{R} \Ga_{\a}(M \ot_{A}^{\mrm{L}} N) \cong 
\opn{K}_{\infty}^{\vee}(A; \bsym{a}) \ot_A (P \ot_A Q) \]
in $\cat{D}(A)$. But according to \cite[Lemma 4.29]{PSY1} there is a 
quasi-isomorphism 
\[ \opn{K}_{\infty}^{\vee}(A; \bsym{a}) \ot_A
\opn{K}_{\infty}^{\vee}(A; \bsym{a}) \to 
\opn{K}_{\infty}^{\vee}(A; \bsym{a}) . \]
\end{proof}

\begin{thm} \label{thm:200}
Let $A$ be a commutative ring, let $\a$ be a weakly proregular ideal in $A$, 
and let $M$ be an $\a$-adically flat $A$-module, with $\a$-adic completion 
$\what{M}$. Then the $A$-module $\what{M}$ is $\a$-adically flat.
\end{thm}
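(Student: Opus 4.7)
The plan is to verify Definition \ref{dfn:210} directly: I must show that $\opn{Tor}^{A}_i(N, \what{M}) = 0$ for every $\a$-torsion $A$-module $N$ and every $i > 0$, i.e.\ that the canonical morphism $\what{M} \ot_A^{\mrm{L}} N \to \what{M} \ot_A N$ in $\cat{D}(A)$ is an isomorphism. The starting point is Proposition \ref{prop:250}: $\a$-adic flatness of $M$ gives an isomorphism $\mrm{L}\La_\a(M) \cong \what{M}$ in $\cat{D}(A)$. A standard consequence of the MGM equivalence (the canonical morphism $M \to \mrm{L}\La_\a(M)$ becomes an isomorphism after applying $\mrm{R}\Ga_\a$) then yields $\mrm{R}\Ga_\a(\what{M}) \cong \mrm{R}\Ga_\a(M)$ in $\cat{D}(A)$.

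Now fix an $\a$-torsion module $N$. Weak proregularity of $\a$ places $N$ in $\cat{D}(A)_{\a\tup{-tor}}$, so that $\mrm{R}\Ga_\a(N) \cong N$. Applying Lemma \ref{lem:220} twice, once to the pair $(\what{M}, N)$ and once to $(M, N)$, and inserting the identification from the previous paragraph, gives
\[ \mrm{R}\Ga_\a(\what{M} \ot_A^{\mrm{L}} N) \cong \mrm{R}\Ga_\a(\what{M}) \ot_A^{\mrm{L}} N \cong \mrm{R}\Ga_\a(M) \ot_A^{\mrm{L}} N \cong \mrm{R}\Ga_\a(M \ot_A^{\mrm{L}} N). \]
Because $M$ is $\a$-adically flat one has $M \ot_A^{\mrm{L}} N \cong M \ot_A N$ in $\cat{D}(A)$, and $M \ot_A N$ is an $\a$-torsion module (annihilators of $N$ kill $M \ot_A N$), so $\mrm{R}\Ga_\a$ acts as the identity on it. Thus $\mrm{R}\Ga_\a(\what{M} \ot_A^{\mrm{L}} N) \cong M \ot_A N$ in $\cat{D}(A)$.

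To close, I will verify that $\mrm{R}\Ga_\a$ also acts as the identity on $\what{M} \ot_A^{\mrm{L}} N$, i.e.\ that each $\opn{Tor}^A_i(N, \what{M})$ is an $\a$-torsion module. This is straightforward: writing $N = \lim_{k \to} \, \Ga_{\a}(N)_{k}$ as the filtered colimit of its submodules $\opn{Hom}_A(A_k, N)$ (each an $A_k$-module), and using that Tor commutes with filtered colimits, each $\opn{Tor}^A_i(N, \what{M})$ is a filtered colimit of $A_k$-modules, hence $\a$-torsion. Consequently $\what{M} \ot_A^{\mrm{L}} N \cong \mrm{R}\Ga_\a(\what{M} \ot_A^{\mrm{L}} N) \cong M \ot_A N$ is concentrated in degree zero, which gives the required vanishing. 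The main obstacle is organizing the invocations of the MGM identities cleanly: one must make sure the isomorphism $\mrm{R}\Ga_\a(\what{M}) \cong \mrm{R}\Ga_\a(M)$ is available in $\cat{D}(A)$ as soon as $\mrm{L}\La_\a(M) \cong \what{M}$, and that Lemma \ref{lem:220} can legitimately be fed the resulting object; the torsion check at the very end is routine.
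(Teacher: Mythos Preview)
Your proposal is correct and follows essentially the same approach as the paper: both use Proposition~\ref{prop:250} to identify $\what{M}$ with $\mrm{L}\La_{\a}(M)$, invoke Lemma~\ref{lem:220} together with the MGM identity $\mrm{R}\Ga_{\a}\circ\mrm{L}\La_{\a}\cong\mrm{R}\Ga_{\a}$, and then observe that $N\ot_A^{\mrm{L}}\what{M}$ has $\a$-torsion cohomology so that $\mrm{R}\Ga_{\a}$ acts as the identity on it. The only cosmetic difference is that the paper tracks the specific morphism $\opn{id}_N\ot_A^{\mrm{L}}\tau_M$ through a commutative diagram (using the functoriality clause of Lemma~\ref{lem:220}), whereas you chain object-level isomorphisms in $\cat{D}(A)$; for the stated conclusion your version is perfectly adequate.
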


Of course the completion $\what{M}$ is also $\a$-adically complete, because 
$\a$ is finitely generated (see \cite[Corollary 3.6]{Ye1}). 

\begin{proof}
Let $N$ be an $\a$-torsion $A$-module. We have the canonical homomorphism 
$\tau_M : M \to \what{M}$. Our plan is to prove that 
\begin{equation} \label{eqn:223}
\opn{id}_N \ot_{A}^{\mrm{L}} \, \tau_M : N \ot_{A}^{\mrm{L}} M \to 
N \ot_{A}^{\mrm{L}} \what{M}
\end{equation}
is an isomorphism in $\cat{D}(A)$. This will show that 
$N \ot_{A}^{\mrm{L}} \what{M}$ has no negative cohomology. 

Proposition \ref{prop:250} says that the canonical morphism 
$\mrm{L} \La_{\a}(M) \to \La_{\a}(M) = \what{M}$
in $\cat{D}(A)$ is an isomorphism. Therefore we can replace (\ref{eqn:223}) 
with the morphism 
\begin{equation} \label{eqn:224}
\opn{id}_N \ot_{A}^{\mrm{L}} \, \tau^{\mrm{L}}_M : N \ot_{A}^{\mrm{L}} M \to 
N \ot_{A}^{\mrm{L}} \mrm{L} \La_{\a}(M) 
\end{equation}
in $\cat{D}(A)$. See \cite[Proposition 3.7]{PSY1} regarding the morphism 
$\tau^{\mrm{L}}_M : M \to \mrm{L} \La_{\a}(M)$. 

The functoriality in Lemma \ref{lem:220} tells us that there is a commutative 
diagram 
\[ \UseTips \xymatrix @C=22ex @R=8ex {
\mrm{R} \Ga_{\a} (N \ot_{A}^{\mrm{L}} M)
\ar[r]^(0.44){ \mrm{R} \Ga_{\a} (\opn{id}_N \ot_{A}^{\mrm{L}} \, 
\tau^{\mrm{L}}_M) }
\ar[d]_{ \cong }
&
\mrm{R} \Ga_{\a} \bigl( N \ot_{A}^{\mrm{L}} \mrm{L} \La_{\a}(M) \bigr)
\ar[d]^{ \cong }
\\
\mrm{R} \Ga_{\a} (N)  \ot_{A}^{\mrm{L}}  \mrm{R} \Ga_{\a}(M)
\ar[r]^(0.45){ \mrm{R} \Ga_{\a}(\opn{id}_N) \, \ot_{A}^{\mrm{L}} \, 
\mrm{R} \Ga_{\a}(\tau^{\mrm{L}}_M)  }
&
\mrm{R} \Ga_{\a} (N) \ot_{A}^{\mrm{L}}
\mrm{R} \Ga_{\a} \bigl( \mrm{L} \La_{\a}(M) \bigr)
} \]
in $\cat{D}(A)$ with vertical isomorphisms. 
According to \cite[Lemma 7.6]{PSY1} the morphism 
\[ \mrm{R} \Ga_{\a}(\tau^{\mrm{L}}_M) : 
\mrm{R} \Ga_{\a}(M) \to 
\mrm{R} \Ga_{\a} \bigl( \mrm{L} \La_{\a}(M) \bigr) \]
is an isomorphism. We conclude that the morphism 
$\mrm{R} \Ga_{\a} (\opn{id}_N \ot_{A}^{\mrm{L}} \, \tau^{\mrm{L}}_M)$,
gotten from the application of the functor $\mrm{R} \Ga_{\a}$ to the morphism 
$\opn{id}_N \ot_{A}^{\mrm{L}} \, \tau^{\mrm{L}}_M$
in (\ref{eqn:224}), is an isomorphism. 

Finally, because the complexes $N \ot_{A}^{\mrm{L}} M$
and 
$N \ot_{A}^{\mrm{L}} \mrm{L} \La_{\a}(M)$
have torsion cohomology, they belong to the category 
$\cat{D}(A)_{\a\tup{-tor}}$. See \cite[Corollary 4.32]{PSY1}. But by 
\cite[Corollaries 4.30 and 4.31]{PSY1} the functors 
\[ \mrm{R} \Ga_{\a}, \, \opn{Id} : \cat{D}(A)_{\a\tup{-tor}} \to 
\cat{D}(A)_{\a\tup{-tor}} \]
are isomorphic. Therefore the morphism 
$\opn{id}_N \ot_{A}^{\mrm{L}} \, \tau^{\mrm{L}}_M$
is an isomorphism. 
\end{proof}

\section{Modules of Decaying Functions} \label{sec:decay}

In this section we prove Theorems \ref{thm:290} and \ref{thm:295}, that 
together constitute Theorem \ref{thm:262} from the Introduction. 
We assume that $A$ is a commutative ring, and $\a$ is a finitely generated 
ideal in it. As before, for each $k$ we write $A_k := A / \a^{k + 1}$. 

Let $M$ be an $\a$-adically complete $A$-module. Following \cite{Ye1}, a
function $f : Z \to M$ is called an {\em $\a$-adically decaying function} if 
for every $k \in \N$ the set 
\[ \{ z \in Z \mid f(z) \notin \a^{k + 1} \cd M \} \]
is finite. In terms of the canonical surjections
$\tau_{M, k} : M \to A_k \ot_A M$, we see that 
a function $f : Z \to M$ is decaying if and only if for every $k$ the composed 
function $\tau_{M, k} \circ f$ has finite support. 
The set of $\a$-adically decaying functions $f : Z \to M$ is denoted 
by $\opn{F}_{\mrm{dec}}(Z, M)$. It is an $A$-module, and there are inclusions 
of $A$-modules
\[ \opn{F}_{\mrm{fin}}(Z, M) \subseteq \opn{F}_{\mrm{dec}}(Z, M) 
\subseteq \opn{F}(Z, M) , \]
that are strict inclusions when the indexing set $Z$ is infinite and $M$ is not 
$\a$-torsion.

Let $M$ be some $A$-module, with $\a$-adic completion $\what{M}$. There is a 
homomorphism 
\[ \opn{F}_{\mrm{fin}}(Z, M) \to \opn{F}_{\mrm{dec}}(Z, \what{M}) , \quad 
f \mapsto \tau_M \circ f . \] 
It is known that $\opn{F}_{\mrm{dec}}(Z, \what{M})$ is
$\a$-adically complete, and it is uniquely isomorphic to the 
$\a$-adic completion of $\opn{F}_{\mrm{fin}}(Z, M)$
by the homomorphism above. See \cite[Corollary 2.9]{Ye1}.

Consider an $A$-module $M$, with $\a$-adic completion $\what{M}$.
For any $k \geq 0$ there is the module $M_k := A_k \ot_A M$,
and the canonical surjection $\pi_k : \what{M} \to M_k$.
Given a set $Z$ there is a homomorphism 
\[ \opn{F}_{\mrm{dec}}(Z, \pi_k) : \opn{F}_{\mrm{dec}}(Z, \what{M}) \to 
\opn{F}_{\mrm{fin}}(Z, M_k) . \]
It is easy to see that this is surjective -- just lift a finitely supported 
function $\bar{f} : Z \to M_k$ to a function $f : Z \to \what{M}$ with the same 
support. We obtain an induced surjective homomorphism 
\begin{equation} \label{eqn:505}
\phi_k :  A_k \ot_A \opn{F}_{\mrm{dec}}(Z, \what{M}) \to 
\opn{F}_{\mrm{fin}}(Z, M_k) . 
\end{equation}

\begin{prop} \label{prop:500}
Let $A$ be a commutative ring, $\a \subseteq A$ a finitely generated ideal, 
$M$ an $A$-module, $Z$ a set, and $k \in \N$. Then the homomorphism 
$\phi_k$ from \tup{(\ref{eqn:505})} is bijective. 
\end{prop}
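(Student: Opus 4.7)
The plan is to reduce this proposition to Theorem \ref{thm:230}(2), applied to a cleverly chosen $\a$-adic system of $A$-modules, namely $\bsym{N} = \{ N_k \}_{k \in \N}$ with $N_k := \opn{F}_{\mrm{fin}}(Z, M_k)$. First I would verify that $\bsym{N}$ really is an $\a$-adic system. The transitions are induced by the surjections $M_{k+1} \to M_k$, and using the natural isomorphism $\opn{F}_{\mrm{fin}}(Z, L) \cong \bigoplus_{z \in Z} L$ from (\ref{eqn:73}), together with the fact that tensor product commutes with direct sums and that $A_k \ot_{A_{k+1}} M_{k+1} \cong M_k$, I get
\[ A_k \ot_{A_{k+1}} N_{k+1} \cong \boplus_{z \in Z} (A_k \ot_{A_{k+1}} M_{k+1}) \cong \boplus_{z \in Z} M_k \cong N_k , \]
so the adic condition of Definition \ref{dfn:15} holds.

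Next I would identify the limit $\what{N} := \lim_{\leftarrow k} N_k$ with $\opn{F}_{\mrm{dec}}(Z, \what{M})$. A compatible family $(f_k)_{k \in \N}$ with $f_k \in N_k$ is by definition a family of functions $f_k : Z \to M_k$ with finite support, compatible with the transitions. Such a family assembles, pointwise, into a single function $f : Z \to \lim_{\leftarrow k} M_k = \what{M}$ with $\tau_{M,k} \circ f = f_k$; the finite-support condition on each $f_k$ is exactly the $\a$-adic decay condition on $f$. Conversely, every decaying function arises this way, and this bijection is an isomorphism of $A$-modules. Moreover, under this identification, the canonical projection $\what{N} \to N_k$ of Theorem \ref{thm:230} is precisely the homomorphism $\opn{F}_{\mrm{dec}}(Z, \pi_k)$ of the excerpt.

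Finally, since $\a$ is finitely generated by hypothesis, Theorem \ref{thm:230}(2) applies to $\bsym{N}$ and yields that the canonical homomorphism
\[ A_k \ot_A \what{N} \to N_k \]
is bijective. Translating back through the identifications, this is exactly the statement that $\phi_k$ is bijective. The main (very minor) obstacle is the bookkeeping required to check that the canonical map produced by Theorem \ref{thm:230}(2) coincides with the map $\phi_k$ induced by $\opn{F}_{\mrm{dec}}(Z, \pi_k)$; but this follows immediately from the definitions of the respective canonical projections.
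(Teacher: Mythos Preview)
Your proof is correct and takes a genuinely different, more economical route than the paper. The paper first reduces to a noetherian base ring (replacing $A$ by $\Z[t_1,\ldots,t_n]$), and then essentially reruns the Mittag-Leffler argument from the proof of Theorem~\ref{thm:230}: it sets $L_k := \opn{Ker}(\phi_k)$, shows the transitions $L_{k+1} \to L_k$ are surjective, passes to the limit, and invokes external results from \cite{Ye1} (Theorem~2.7 and Corollary~3.8) to see that the limit map $\phi$ is bijective, whence each $L_k = 0$.

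Your approach sidesteps both the noetherian reduction and the repeated ML argument by recognising that $\{ \opn{F}_{\mrm{fin}}(Z, M_k) \}_{k \in \N}$ is itself an $\a$-adic system with limit $\opn{F}_{\mrm{dec}}(Z, \what{M})$, so that Theorem~\ref{thm:230}(2) applies directly. The identification of the limit is exactly the content of \cite[Corollary~2.9]{Ye1}, already quoted in the paper just above; alternatively your direct description works once one notes (again by Theorem~\ref{thm:230}(2), applied to the system $\{ M_k \}$) that $\ker(\pi_k : \what{M} \to M_k) = \a^{k+1} \what{M}$, so that the finite-support condition on each $\pi_k \circ f$ really is the decay condition. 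What your approach buys is a cleaner logical structure: Proposition~\ref{prop:500} becomes an immediate corollary of Theorem~\ref{thm:230}, with no appeal to noetherianness and no second ML argument. The paper's approach has the minor advantage of making the role of the \cite{Ye1} results about decaying functions more explicit.
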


\begin{proof}
Say $\a$ is generated by elements $a_1, \ldots, a_n$. Consider the polynomial 
ring $B := \Z[t_1, \ldots, t_n]$,
the ideal $\b := (t_1, \ldots, t_n)$,
and the ring homomorphism $B \to A$, $t_i \mapsto a_i$. 
Then $\a^{k + 1} = \b^{k + 1} \cd A$ for every $k$. By replacing $A$ with $B$, 
we can assume that $A$ is a noetherian ring. 

Define $L_k := \opn{Ker}(\phi_k)$. So for each $k$ there is an exact
sequence 
\begin{equation} \label{eqn:502}
0 \to L_k \to A_k \ot_A \opn{F}_{\mrm{dec}}(Z, \what{M}) \xar{\phi_k}
\opn{F}_{\mrm{fin}}(Z, M_k) \to 0 . 
\end{equation}
We get an inverse system $\{ L_k \}_{k \in \N}$, and its limit is 
$L :=  \lim_{\leftarrow k} \, L_k$. 
As shown in the proof of Theorem \ref{thm:230}, the homomorphisms 
$L_{k + 1} \to L_k$ are surjective. Passing to the inverse limit in 
(\ref{eqn:502}) we get the sequence
\[ 0 \to L \to 
\lim_{\leftarrow k} \, \bigl( A_k \ot_A \opn{F}_{\mrm{dec}}(Z, \what{M}) \bigr)
\xar{\phi} \lim_{\leftarrow k} \, \opn{F}_{\mrm{fin}}(Z, M_k) \to 0 , \]
and it is exact by the ML argument. Now according to \cite[Theorem 2.7]{Ye1} 
and \cite[Corollary 3.8]{Ye1}, the homomorphism $\phi$ is bijective (and the 
two limit modules are canonically isomorphic to $\opn{F}_{\mrm{dec}}(Z, 
\what{M})$). It follows that $L = 0$. But $L \to L_k$ is surjective, so 
$L_k = 0$ and $\phi_k$ is bijective. 
\end{proof}

An $A$-module $P$ is called an {\em $\a$-adically free module} if it is 
isomorphic to $\opn{F}_{\mrm{dec}}(Z, \what{A})$
for some set $Z$. The reason for the name is that the functor 
$Z \mapsto \opn{F}_{\mrm{dec}}(Z, \what{A})$
is left adjoint to the forgetful functor 
\[ \cat{Mod}_{\a\tup{-com}} A \to \cat{Set} . \]
See \cite[Corollary 2.6]{Ye1}.

According to \cite[Theorem 3.4(2)]{Ye1}, when $A$ is noetherian, every 
$\a$-adically free module is flat. In the weakly proregular case we have the 
next result. 

\begin{lem} \label{lem:310} 
Assume $\a$ is weakly proregular. If $P$ is an $\a$-adically free 
$A$-module module, then it is $\a$-adically flat.
\end{lem}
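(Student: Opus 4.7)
The proof plan is very short once we recognize $P$ as an $\a$-adic completion. Concretely, taking $M = A$ in the description of decaying functions recalled earlier in this section (from \cite[Corollary 2.9]{Ye1}), the $A$-module $\opn{F}_{\mrm{dec}}(Z, \what{A})$ is canonically isomorphic to the $\a$-adic completion of $\opn{F}_{\mrm{fin}}(Z, A)$. Hence, if $P$ is $\a$-adically free, then $P \cong \La_{\a}(F)$ where $F := \opn{F}_{\mrm{fin}}(Z, A) \cong \bigoplus_{z \in Z} A$.

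First I would note that $F$ is a free $A$-module, so it is flat, and therefore $\opn{Tor}^{A}_i(N, F) = 0$ for every $A$-module $N$ and every $i > 0$. In particular $F$ is $\a$-adically flat in the sense of Definition \ref{dfn:210}.

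Next I would invoke Theorem \ref{thm:200}: since $\a$ is assumed to be weakly proregular, the $\a$-adic completion of any $\a$-adically flat module is again $\a$-adically flat. Applied to $F$, this gives that $\what{F} \cong P$ is $\a$-adically flat, which is exactly the conclusion.

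There is no real obstacle here; the content of the lemma is entirely absorbed into Theorem \ref{thm:200}, and the only thing to check is the identification $P \cong \La_{\a}(F)$, which is immediate from the cited result about decaying functions being the completion of finitely supported ones. (Note that Theorem \ref{thm:290}, which is proved later in this section, should \emph{not} be used, since this lemma is one of the ingredients going into its proof.)
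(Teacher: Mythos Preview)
Your proposal is correct and follows essentially the same argument as the paper: identify $P \cong \opn{F}_{\mrm{dec}}(Z, \what{A})$ with the $\a$-adic completion of the free module $\opn{F}_{\mrm{fin}}(Z, A)$ via \cite[Corollary 2.9]{Ye1}, observe that the latter is flat (hence $\a$-adically flat), and apply Theorem \ref{thm:200}. Your caution about not invoking Theorem \ref{thm:290} is also well placed.
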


\begin{proof}
Choose an isomorphism 
$P \cong \opn{F}_{\mrm{dec}}(Z, \what{A})$. 
By \cite[Corollary 2.9]{Ye1}, $P$ is isomorphic to the completion of 
the free module $\opn{F}_{\mrm{fin}}(Z, A)$.  Since the latter is flat over 
$A$, Theorem \ref{thm:200} says that $P$ is $\a$-adically flat.
\end{proof}

Suppose  $\bsym{M} = \{ M_k \}_{k \in \N}$ is a flat $\a$-adic system of 
$A$-modules. By Corollary \ref{cor:290} there exists a free resolution 
\begin{equation} \label{eqn:291}
\cdots \to \bsym{P}^{-2} \xar{\bsym{\d}^{-1}} \bsym{P}^{-1} 
\xar{\bsym{\d}^0} \bsym{P}^{0} 
\xar{\bsym{\eta}} \bsym{M} \to 0 .
\end{equation}
of $\bsym{M}$. By definition, the resolution (\ref{eqn:291}) consists of a free 
resolution 
\begin{equation} \label{eqn:293}
\cdots \to P_k^{-2} \xar{\d_k^{-1}} P_k^{-1} 
\xar{\d_k^0} P_k^{0} \xar{\eta_k} M_k \to 0 
\end{equation}
of $M_k$ for every $k$, that fit into an inverse system. 
By deleting $M_k$ we get the complex $P_k$ from (\ref{eqn:320}). 

Let $\what{M}$ be the limit of the system $\bsym{M}$, and for each $i$ let 
$\what{P}^i$ be the limit of the system $\bsym{P}^i$. 
We get a sequence 
\begin{equation} \label{eqn:292}
\cdots \to \what{P}^{-2} \xar{\what{\d}^{-1}} \what{P}^{-1} 
\xar{\what{\d}^0} \what{P}^{0} \xar{\what{\eta}} \what{M} \to 0 
\end{equation}
of $A$-modules. The modules $\what{P}^{i}$ are $\a$-adically free, and 
$\what{M}$ is $\a$-adically complete (by Theorem \ref{thm:230}). 
Let us denote by $\what{P}$ the complex of $A$-modules 
\[ \what{P} :=
\bigl( \cdots \to \what{P}^{-2} \xar{\what{\d}^{-1}} \what{P}^{-1} 
\xar{\what{\d}^0} \what{P}^{0} \to 0  \to \cdots \bigr) . \]
Then 
$\what{\eta} : \what{P} \to \what{M}$
is a homomorphism of complexes. 

\begin{lem} \label{lem:290}
\mbox{}
\begin{enumerate}
\item The sequence \tup{(\ref{eqn:292})} is exact. 

\item For every $k \geq 0$, the sequence of $A_k$-modules gotten by applying 
$A_k \ot_A -$ to the sequence \tup{(\ref{eqn:292})} is isomorphic to the 
sequence \tup{(\ref{eqn:293})}. 
\end{enumerate}
\end{lem}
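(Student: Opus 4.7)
The plan is to prove (2) first --- which pins down the $k$-th level of the limit sequence (\ref{eqn:292}) after tensoring with $A_k$ --- and then use a Mittag--Leffler argument at the level of syzygies to deduce the exactness in (1).

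For (2): since each $\bsym{P}^{-i}$ is a free $\a$-adic system on some set $Z_i$, the limit $\what{P}^{-i} = \lim_{\leftarrow k} \opn{F}_{\mrm{fin}}(Z_i, A_k)$ is canonically identified with $\opn{F}_{\mrm{dec}}(Z_i, \what{A})$, i.e.\ with the $\a$-adic completion of the free $A$-module $\opn{F}_{\mrm{fin}}(Z_i, A)$. Proposition \ref{prop:500} then yields a natural isomorphism $A_k \ot_A \what{P}^{-i} \iso P_k^{-i}$, while Theorem \ref{thm:230}(2) yields $A_k \ot_A \what{M} \iso M_k$. Since by construction the maps $\what{\d}^{-i}$ and $\what{\eta}$ in (\ref{eqn:292}) are the inverse limits of $\d_k^{-i}$ and $\eta_k$, these isomorphisms intertwine with the structure maps, which gives (2).

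For (1): I plan to break the long resolution into short exact sequences via syzygies and pass to the limit using Mittag--Leffler. Set $K_k^0 := M_k$ and, for $i \geq 1$, define $K_k^{-i} := \opn{Im}\bigl(\d_k^{-i+1}: P_k^{-i} \to P_k^{-i+1}\bigr) \subseteq P_k^{-i+1}$; these fit into short exact sequences
\[ 0 \to K_k^{-i-1} \to P_k^{-i} \to K_k^{-i} \to 0 \]
for every $i \geq 0$ (the right-hand map being $\eta_k$ when $i = 0$). Each of the inverse systems $\{P_k^{-i}\}_k$, $\{M_k\}_k$ and $\{K_k^{-i}\}_k$ has surjective transition maps: for $\{P_k^{-i}\}_k$ this is because $P_k^{-i} \cong \opn{F}_{\mrm{fin}}(Z_i, A_k)$ and $A_{k+1} \surj A_k$; for $\{M_k\}_k$ it is the $\a$-adic system condition; and for $\{K_k^{-i}\}_k$ with $i \geq 1$ one lifts $x = \d_k^{-i+1}(z) \in K_k^{-i}$ by picking any lift $\tilde{z} \in P_{k+1}^{-i}$ of $z$ and taking $\d_{k+1}^{-i+1}(\tilde{z})$. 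The classical Mittag--Leffler lemma then implies that $\lim_{\leftarrow k}$ is exact on each of the short exact sequences above, and splicing the resulting short exact sequences along the inclusions $\lim_{\leftarrow k} K_k^{-i} \hookrightarrow \what{P}^{-i+1}$ recovers (\ref{eqn:292}) as an exact sequence. The main obstacle is purely bookkeeping: no deep input is needed beyond Mittag--Leffler and the already-established Proposition \ref{prop:500} and Theorem \ref{thm:230}.
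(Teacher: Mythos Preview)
Your proof is correct and follows essentially the same approach as the paper. The paper's version is more concise: for (2) it simply invokes Theorem~\ref{thm:230}(2) uniformly on all the $\a$-adic systems $\bsym{P}^{-i}$ and $\bsym{M}$ (rather than treating the free ones separately via Proposition~\ref{prop:500}), and for (1) it just cites the Mittag--Leffler argument directly without spelling out the syzygy splitting.
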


\begin{proof}
(1) The sequence (\ref{eqn:292}) is the inverse limit of the exact sequences 
(\ref{eqn:293}). By the Mittag-Leffler argument, the limit is an exact 
sequence. 

\medskip \noindent 
(2) This follows from Theorem \ref{thm:230}(2).
\end{proof}

\begin{thm} \label{thm:290}
Let $A$ be a commutative ring, let $\a$ be a weakly proregular ideal in $A$, 
and let  $\bsym{M} = \{ M_k \}_{k \in \N}$ be a flat $\a$-adic system of 
$A$-modules, with 
limit $\what{M} = \lim_{\leftarrow k} \, M_k$.
Then the $A$-module $\what{M}$ is $\a$-adically flat. 
\end{thm}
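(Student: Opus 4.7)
The plan is to verify criterion (iii) of Theorem \ref{thm:168} for $\what{M}$, namely that $\opn{Tor}^A_i(A_0, \what{M}) = 0$ for all $i > 0$ and that $A_0 \ot_A \what{M}$ is flat over $A_0$. By that theorem, these two conditions imply $\a$-adic flatness of $\what{M}$.

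First I would invoke Corollary \ref{cor:290} to obtain a free resolution $\bsym{\eta} : \bsym{P} \to \bsym{M}$ of the $\a$-adic system $\bsym{M}$ (which exists because $\bsym{M}$ is flat), and then pass to limits to obtain the complex $\what{P}$ together with an augmentation $\what{\eta} : \what{P} \to \what{M}$. By Lemma \ref{lem:290}(1) the augmented sequence \tup{(\ref{eqn:292})} is exact, so $\what{\eta}$ is a quasi-isomorphism. Moreover, each $\what{P}^{-i}$ is the limit of the free system $\{\opn{F}_{\mrm{fin}}(Z_i, A_k)\}_{k \in \N}$, hence is isomorphic to $\opn{F}_{\mrm{dec}}(Z_i, \what{A})$, and is thus $\a$-adically free. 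By Lemma \ref{lem:310} each $\what{P}^{-i}$ is therefore $\a$-adically flat; this is the one place where the weak proregularity of $\a$ enters the argument.

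The crux is to use $\what{P}$ as a resolution well-adapted to computing $\opn{Tor}^A_i(N, \what{M})$ for $\a$-torsion modules $N$. Since each $\what{P}^{-i}$ is $\a$-adically flat, one has $\opn{Tor}^A_j(N, \what{P}^{-i}) = 0$ for every $j > 0$ and every $\a$-torsion module $N$. Because $\what{P}$ is bounded above, this implies that the canonical morphism $N \ot_A^{\mrm{L}} \what{P} \to N \ot_A \what{P}$ in $\cat{D}(A)$ is an isomorphism -- for instance, by filtering via the stupid truncations $\sigma^{\geq -k} \what{P}$ and arguing inductively with the resulting distinguished triangles (or equivalently via the hyperTor spectral sequence, which degenerates at $E_2$). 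Combined with $\what{P} \simeq \what{M}$, this yields
\[
\opn{Tor}^A_i(N, \what{M}) \;\cong\; \opn{H}^{-i}(N \ot_A \what{P})
\]
for every $\a$-torsion $A$-module $N$ and every $i \geq 0$.

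Taking $N = A_0$ and applying Lemma \ref{lem:290}(2), the complex $A_0 \ot_A \what{P}$ is identified with the complex $P_0$ from \tup{(\ref{eqn:320})}, which is a free resolution of $M_0$. Consequently $\opn{Tor}^A_i(A_0, \what{M}) = \opn{H}^{-i}(P_0) = 0$ for all $i > 0$, and $A_0 \ot_A \what{M} \cong \opn{H}^0(P_0) = M_0$, which is flat over $A_0$ by the flatness hypothesis on the system $\bsym{M}$. Both conditions of Theorem \ref{thm:168}(iii) hold, so $\what{M}$ is $\a$-adically flat. The main technical obstacle I anticipate is justifying the identification $N \ot_A^{\mrm{L}} \what{P} \simeq N \ot_A \what{P}$: the modules $\what{P}^{-i}$ are merely $\a$-adically flat rather than genuinely flat, and it is precisely the matching between this restricted flatness and the restriction that $N$ be $\a$-torsion -- together with the upper boundedness of $\what{P}$ -- that makes the argument go through.
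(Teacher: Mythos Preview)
Your proposal is correct and follows essentially the same route as the paper's proof: build the free resolution of the system, pass to limits, use Lemma \ref{lem:310} to see that each $\what{P}^{-i}$ is $\a$-adically flat, and conclude that $N \ot_A^{\mrm{L}} \what{P} \cong N \ot_A \what{P}$ for $\a$-torsion $N$, then identify $A_k \ot_A \what{P}$ with $P_k$ via Lemma \ref{lem:290}(2). The only cosmetic difference is that the paper verifies the definition of $\a$-adic flatness directly for arbitrary $\a$-torsion $N$ (reducing to finitely generated $N$, hence an $A_k$-module for some $k$), whereas you invoke criterion (iii) of Theorem \ref{thm:168} and so only need to treat $N = A_0$; both arguments rest on the same ingredients.
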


\begin{proof}
We have to prove that $\opn{Tor}_i^A(N, \what{M}) = 0$ for every $i > 0$ and 
every $\a$-torsion $A$-module $N$. Since $\opn{Tor}_i^A(-, \what{M})$ commutes 
with direct limits, we can assume that $N$ is a finitely generated module. Thus 
$N$ is an $A_k$-module for some $k$. 

Choose a free resolution 
$\bsym{\eta} : \bsym{P} \to \bsym{M}$. 
By Lemma \ref{lem:290}(1) we get an exact sequence (\ref{eqn:292}).
By Lemma \ref{lem:310} each $\what{P}^i$ is an $\a$-adically flat 
$A$-module. So $\what{P}$ is a bounded above complex of $A$-modules, that are 
left acyclic for the functor $N \ot_A -$. 
This says that the canonical morphism 
\[ N \ot_{A}^{\mrm{L}} \what{P} \to N \ot_{A} \what{P} \]
in $\cat{D}(A)$ is an isomorphism. On the other hand, since 
$\what{\eta} : \what{P} \to \what{M}$
is an isomorphism in $\cat{D}(A)$, we see that 
\[ N \ot_{A}^{\mrm{L}} \what{P} \cong N \ot_{A}^{\mrm{L}} \what{M} \]
in $\cat{D}(A)$. Therefore it suffices to prove that 
$\opn{H}^{-i}(N \ot_{A} \what{P}) = 0$ for all $i > 0$. 

Now 
\[ N \ot_{A} \what{P} \cong N \ot_{A_k} A_k \ot_{A}  \what{P} , \]
and by Lemma \ref{lem:290}(2) we know that 
$A_k \ot_{A}  \what{P} \cong P_k$
as complexes. So 
\[ N \ot_{A} \what{P} \cong N \ot_{A_k} P_k  \]
as complexes. Since $\eta_k : P_k \to M_k$ is a free resolution of $M_k$, we 
have 
\[ N \ot_{A_k} P_k \cong N \ot_{A_k}^{\mrm{L}} M_k  \]
in $\cat{D}(A_k)$. But $M_k$ is a flat $A_k$-module, and hence 
$\opn{H}^{-i}(N \ot_{A_k}^{\mrm{L}} M_k) = 0$
for all $i > 0$. 
\end{proof}

\begin{lem} \label{lem:295}
Assume $A$ is noetherian and $\a$-adically complete. Let $P$ be an 
$\a$-adically free $A$-module, and let $N$ be a finitely generated $A$-module.
For each $k \geq 0$ define 
$P_k := A_k \ot_A P$ and $N_k := A_k \ot_A N$. Then the canonical homomorphism 
\[ P \ot_A N \to \lim_{\leftarrow k} \, (P_k \ot_{A_k} N_k) \]
is bijective. 
\end{lem}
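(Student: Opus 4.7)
The plan is to use the noetherian hypothesis to pick a finite presentation of $N$, tensor both sides, and then reduce the claim to a standard Mittag-Leffler argument on the resulting right-exact sequences.

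First I would fix a finite presentation
\[ A^r \xar{\phi} A^s \to N \to 0 \]
of $N$ (available because $A$ is noetherian and $N$ is finitely generated). Applying $P \ot_A -$ yields the right-exact sequence
\[ P^r \xar{\phi \ot \opn{id}_P} P^s \to P \ot_A N \to 0, \]
while applying $(A_k \ot_A -)$ first and then $P_k \ot_{A_k} -$ gives, for each $k$, the right-exact sequence
\[ P_k^r \xar{\phi_k} P_k^s \to P_k \ot_{A_k} N_k \to 0, \]
where $\phi_k$ is the reduction of $\phi$ modulo $\a^{k+1}$. These assemble into an inverse system in $k$.

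Next I would take inverse limits of this second family. Since $P = \opn{F}_{\mrm{dec}}(Z, \what{A}) = \opn{F}_{\mrm{dec}}(Z, A)$ (using that $A$ is $\a$-adically complete) and $P_k \cong \opn{F}_{\mrm{fin}}(Z, A_k)$ by Proposition \ref{prop:500}, the results from \cite[Theorem 2.7 and Corollary 3.8]{Ye1} identify $\lim_{\leftarrow k} P_k$ canonically with $P$; and because finite direct sums commute with inverse limits, the same identification gives $\lim_{\leftarrow k} P_k^r \cong P^r$ and $\lim_{\leftarrow k} P_k^s \cong P^s$, compatibly with $\phi$ and $\phi_k$. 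The transition maps $P_{k+1}^s \to P_k^s$ are surjective, hence the kernels $K_k := \opn{Ker}(P_k^s \to P_k \ot_{A_k} N_k) = \opn{Im}(\phi_k)$ form an inverse system with surjective transitions, so the Mittag-Leffler argument applies and yields an exact sequence
\[ \lim_{\leftarrow k} P_k^r \to \lim_{\leftarrow k} P_k^s \to \lim_{\leftarrow k} (P_k \ot_{A_k} N_k) \to 0. \]

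Finally I would compare the two right-exact sequences via the evident commutative diagram whose left two vertical maps are the isomorphisms identified above, and whose right vertical map is the canonical map in question. Because the cokernels of isomorphic pairs of maps are canonically isomorphic via the induced map on cokernels, this forces $P \ot_A N \to \lim_{\leftarrow k} (P_k \ot_{A_k} N_k)$ to be bijective. The only delicate point is verifying that the identifications $\lim P_k^r \cong P^r$ and $\lim P_k^s \cong P^s$ genuinely intertwine $\phi$ with $\lim \phi_k$; this is the naturality built into the isomorphism $\lim P_k \cong P$, since $\phi$ and $\phi_k$ are defined from the same matrix entries in $A$ and its quotients $A_k$.
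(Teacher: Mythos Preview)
Your proof is correct, but it proceeds along a different route from the paper's. The paper argues by a direct chain of canonical isomorphisms specific to modules of decaying functions: it invokes \cite[Lemma 3.3]{Ye1} to identify $\opn{F}_{\mrm{dec}}(Z, A) \ot_A N$ with $\opn{F}_{\mrm{dec}}(Z, N)$, then \cite[Theorem 2.7]{Ye1} to identify $\opn{F}_{\mrm{dec}}(Z, N)$ with $\lim_{\leftarrow k} \opn{F}_{\mrm{fin}}(Z, N_k)$, and finally unwinds $P_k \ot_{A_k} N_k$ as $\opn{F}_{\mrm{fin}}(Z, N_k)$. So the paper never chooses a presentation of $N$; instead it exploits the explicit structural description of $P$ as a decaying-function module. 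Your argument, by contrast, is the classical ``reduce to free modules via a finite presentation and apply Mittag--Leffler'' maneuver: it uses only the completeness of $P$ (i.e.\ $P \cong \lim_{\leftarrow k} P_k$), right-exactness of tensor, and surjectivity of the transitions. What your approach buys is that it is more elementary and more portable --- it would work verbatim for any $\a$-adically complete $P$ with $P_k$ satisfying the relevant surjectivity, without ever unpacking the $\opn{F}_{\mrm{dec}}$ description. What the paper's approach buys is a more transparent identification of the target as $\opn{F}_{\mrm{dec}}(Z, N)$, which fits naturally with the surrounding theory. A minor remark: your appeal to Proposition~\ref{prop:500} and \cite{Ye1} to get $\lim_{\leftarrow k} P_k \cong P$ is more than you need --- this is immediate from the $\a$-adic completeness of $P$, since $P_k = A_k \ot_A P$ by definition.
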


\begin{proof}
Fix some isomorphism 
$P \cong  \opn{F}_{\mrm{dec}}(Z, A)$. By \cite[Lemma 3.3]{Ye1} the 
canonical homomorphism 
\[ \opn{F}_{\mrm{dec}}(Z, A) \ot_A N \to \opn{F}_{\mrm{dec}}(Z, N) \]
is bijective. By \cite[Theorem 2.7]{Ye1}  the canonical homomorphism 
\[ \opn{F}_{\mrm{dec}}(Z, N) \to 
\lim_{\leftarrow k} \, \opn{F}_{\mrm{fin}}(Z, N_k) \]
is bijective.  And trivially the canonical homomorphism 
\[ \opn{F}_{\mrm{fin}}(Z, A_k) \ot_{A_k} N_k \to 
\opn{F}_{\mrm{fin}}(Z, N_k) \]
is bijective. Combining these isomorphisms we deduce that 
the canonical homomorphism 
\[ \opn{F}_{\mrm{dec}}(Z, A) \ot_A N \to \lim_{\leftarrow k} \,
\bigl( \opn{F}_{\mrm{fin}}(Z, A_k) \ot_{A_k} N_k \bigr) \]
is bijective. Finally, According to \cite[Theorem 3.4(1)]{Ye1} the canonical 
homomorphism 
\[ A_k \ot_A \opn{F}_{\mrm{dec}}(Z, A) \to \opn{F}_{\mrm{fin}}(Z, A_k) \]
is bijective.
\end{proof}

Here is the noetherian variant of Theorem \ref{thm:290}.

\begin{thm} \label{thm:295}
Let $A$ be a noetherian commutative ring, let $\a$ be an ideal in $A$, 
and let  $\bsym{M} = \{ M_k \}_{k \in \N}$ be a flat $\a$-adic system of 
$A$-modules, with limit $\what{M} = \lim_{\leftarrow k} \, M_k$.
Then the $A$-module $\what{M}$ is flat. 
\end{thm}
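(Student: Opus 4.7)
The plan is to reduce, via direct limits, to proving $\opn{Tor}^A_i(N, \what{M}) = 0$ for $i > 0$ and every finitely generated $A$-module $N$, and then to establish this vanishing using an explicit flat resolution of $\what{M}$ built from a free resolution of the adic system $\bsym{M}$.

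Applying Corollary \ref{cor:290}, I pick a free resolution $\bsym{\eta}: \bsym{P} \to \bsym{M}$ of the $\a$-adic system, and take inverse limits in each degree to obtain a map $\what{\eta}: \what{P} \to \what{M}$. By Lemma \ref{lem:290}(1) this is exact, and each $\what{P}^j$ is $\a$-adically free, hence flat over the noetherian ring $A$ by \cite[Theorem 3.4(2)]{Ye1}. Consequently $\what{P} \to \what{M}$ is a flat resolution, so $\opn{Tor}^A_i(N, \what{M}) \cong \opn{H}^{-i}(N \ot_A \what{P})$ for every $A$-module $N$.

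Next I would identify $N \ot_A \what{P}^j$ with $\lim_{\leftarrow k}(N_k \ot_{A_k} P_k^j)$ in each degree; this follows from Lemma \ref{lem:295} after base-changing to the noetherian, $\a\what{A}$-adically complete ring $\what{A}$, using $N \ot_A \what{A} \cong \what{N}$ for finitely generated $N$ over noetherian $A$. So the complex $N \ot_A \what{P}$ coincides with $\lim_{\leftarrow k}(N_k \ot_{A_k} P_k)$, whose terms have surjective transitions in every degree (as a tensor product of the surjections $N_{k+1} \surj N_k$ and $P^j_{k+1} \surj P^j_k$) and whose individual cohomologies $\opn{Tor}^{A_k}_i(N_k, M_k)$ vanish in negative degrees by flatness of $M_k$ over $A_k$. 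The Milnor short exact sequence (or, equivalently, splitting the resolution into short exact sequences and applying the Mittag-Leffler argument as in the proof of Theorem \ref{thm:230}) then forces $\opn{H}^{-i}(N \ot_A \what{P}) = 0$ for $i > 0$, giving the required flatness. The main obstacle is the inverse-limit identification of the tensor product at each degree, which ultimately rests on the structural results for decaying functions from \cite{Ye1} that underlie Lemma \ref{lem:295}.
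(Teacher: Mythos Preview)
Your proposal is correct and follows essentially the same route as the paper's proof. The only cosmetic difference is that the paper first replaces $A$ by $\what{A}$ at the outset (using that $A \to \what{A}$ is flat and that $\bsym{M}$ is also a flat $\what{\a}$-adic system over $\what{A}$), so that Lemma~\ref{lem:295} applies directly; you instead perform this base change inline when invoking Lemma~\ref{lem:295}, via $N \ot_A \what{P}^j \cong \what{N} \ot_{\what{A}} \what{P}^j$. Both arrangements lead to the same identification $N \ot_A \what{P} \cong \lim_{\leftarrow k}(N_k \ot_{A_k} P_k)$ and the same Mittag-Leffler conclusion.
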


\begin{proof}
Let $\what{A}$ be the $\a$-adic completion of $A$, and let 
$\what{\a} := \a \cd \what{A} \subseteq \what{A}$. 
The completion $\what{M}$ is an $\what{A}$-module, 
and the action of $A$ on $\what{M}$ is through the flat ring homomorphism 
$A \to \what{A}$. So it suffices to prove that $\what{M}$ is a flat 
$\what{A}$-module. Next note that $\bsym{M}$ is a flat $\what{\a}$-adic system 
of $\what{A}$-modules. Taking these facts together, we see that we can replace 
$A$ with $\what{A}$ -- namely we can assume that $A$ is 
$\a$-adically complete. 
  
As observed in the proof of Theorem \ref{thm:290}, it is enough to prove 
that $\opn{H}^{i}(N \ot_{A}^{\mrm{L}} \what{M}) = 0$ for all $i < 0$
and all finitely generated $A$-modules $N$. 
Let $\bsym{\eta} : \bsym{P} \to \bsym{M}$ be a free resolution of 
$\bsym{M}$. By Lemma \ref{lem:290}(2) we have an exact sequence 
(\ref{eqn:292}), and by \cite[Theorem 3.4(2)]{Ye1} this is a flat resolution of 
$\what{M}$. Therefore 
\[ N \ot_{A}^{\mrm{L}} \what{M} \cong N \ot_A \what{P} \]
in $\cat{D}(A)$. 

For every $k \geq 0$ we have the complex of free $A_k$-modules $P_k$ from 
(\ref{eqn:320}), and the quasi-isomorphism 
$\eta_k : P_k \to M_k$. Because $M_k$ is a flat $A_k$-module, the sequence 
\[ \begin{aligned}
& \cdots \to N_k \ot_{A_k} P_k^{-2} \xar{\opn{id} \ot \, \d_k^{-1}} 
N_k \ot_{A_k} P_k^{-1} 
\\
& \qquad  \qquad \xar{\opn{id} \ot \, \d_k^0} 
N_k \ot_{A_k} P_k^{0} \xar{\opn{id} \ot \, \eta_k} N_k \ot_{A_k} M_k
\to 0 \to \cdots 
\end{aligned} \]
is also exact. The Mittag-Leffler argument tells us that in the limit we still 
have an exact sequence. Thus 
\[ \opn{H}^i \Bigl( \lim_{\leftarrow k} \, (N_k \ot_{A_k} P_k) \Bigr) = 0 \]
for all $i < 0$. Finally, by Lemma \ref{lem:295} we know that 
\[ N \ot_A \what{P} \cong 
\lim_{\leftarrow k} \, (N_k \ot_{A_k} P_k) \]
as complexes.
\end{proof}

\begin{cor} \label{cor:320}
Let $A$ be a noetherian commutative ring, let $\a$ be an ideal in $A$, 
and let $\what{M}$ be an $\a$-adically flat $\a$-adically complete $A$-module. 
Then $\what{M}$ is a flat $A$-module. 
\end{cor}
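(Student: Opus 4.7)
The plan is to deduce Corollary \ref{cor:320} directly from the combination of Theorem \ref{thm:168} (the characterization of $\a$-adic flatness) and Theorem \ref{thm:295} (limits of flat $\a$-adic systems are flat in the noetherian case). The bridge between the two results is the observation that the $\a$-adic system induced by an $\a$-adically flat module is itself a flat system.

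First I would form the induced $\a$-adic system $\bsym{M} = \{ M_k \}_{k \in \N}$, where $M_k := A_k \ot_A \what{M}$, with the obvious transitions. This is indeed an $\a$-adic system, as the natural isomorphism $A_k \ot_{A_{k+1}} (A_{k+1} \ot_A \what{M}) \cong A_k \ot_A \what{M}$ realizes condition in Definition \ref{dfn:15}. Since $\what{M}$ is assumed to be $\a$-adically flat, the implication (i) $\Rightarrow$ (ii) of Theorem \ref{thm:168} tells us that each $M_k = A_k \ot_A \what{M}$ is a flat $A_k$-module. Therefore $\bsym{M}$ is a flat $\a$-adic system in the sense of the paper.

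Next I would identify the limit of $\bsym{M}$ with $\what{M}$ itself. Because $A$ is noetherian, the ideal $\a$ is finitely generated, so Theorem \ref{thm:230}(2) (more elementarily, the fact that $\what{M}$ is $\a$-adically complete by hypothesis) gives $\lim_{\leftarrow k} M_k \cong \what{M}$ canonically. At this point the hypotheses of Theorem \ref{thm:295} are all in place: $A$ is noetherian, $\bsym{M}$ is a flat $\a$-adic system, and its limit is precisely $\what{M}$. Invoking Theorem \ref{thm:295} yields that $\what{M}$ is a flat $A$-module, which is the desired conclusion.

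There is no real obstacle here — the argument is simply a verification that the hypothesis of $\a$-adic flatness on $\what{M}$ produces a flat induced system, so that the heavy lifting done by Theorem \ref{thm:295} applies. The only subtlety worth flagging is the identification of the limit with $\what{M}$, which relies on $\what{M}$ being already $\a$-adically complete; this is built into the statement of the corollary.
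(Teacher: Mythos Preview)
Your proposal is correct and follows essentially the same approach as the paper: form the induced $\a$-adic system $\{A_k \ot_A \what{M}\}$, use Theorem~\ref{thm:168} to see it is a flat system, use completeness of $\what{M}$ to identify it with the limit, and then apply Theorem~\ref{thm:295}. The paper's proof is just a terser version of what you wrote.
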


\begin{proof}
Let $\{ M_k \}_{k \in \N}$ be the $\a$-adic system induced by 
$\what{M}$, i.e.\ $M_k = A_k \ot_A \what{M}$. 
The completeness of $\what{M}$ says that $\what{M}$ is the limit of this system.
By Theorem \ref{thm:168} we know that this is a flat  $\a$-adic system.
Theorem \ref{thm:295} says that $\what{M}$ is flat. 
\end{proof}

\section{The Non-Noetherian Example} 
\label{sec:example}

In this section we prove Theorem \ref{thm:205}, that provides an example 
of an  $\a$-adically complete $\a$-adically flat $A$-module which is not flat. 
The necessary facts on K\"ahler differentials can be found in 
\cite[Section 26]{Ma}.

\begin{lem} \label{lem:300}
Let $\K$ be a field of characteristic $0$, and let $B := \K[[t]]$, the ring 
of power series in a variable $t$. Then the module of K\"ahler $1$-forms 
$\Om^1_{B / \K}$ is not finitely generated as a $B$-module. 
\end{lem}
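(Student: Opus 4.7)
My plan is to pass to the fraction field $L := \K((t)) = \opn{Frac}(B)$ and reduce the statement to a claim about transcendence degrees. Since K\"ahler differentials commute with localization, there is a canonical isomorphism $\Om^1_{L/\K} \cong L \ot_B \Om^1_{B/\K}$. Hence, if $\Om^1_{B/\K}$ were generated by $n$ elements over $B$, then $\Om^1_{L/\K}$ would be spanned by $n$ elements over $L$, so its $L$-dimension would be at most $n$.

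Next, I would invoke the standard formula from characteristic-zero commutative algebra: for a field extension $L/\K$ of characteristic $0$, one has $\dim_L \Om^1_{L/\K} = \opn{tr.deg}_{\K} L$. In characteristic $0$ every extension is separably generated, and a separating transcendence basis of $L$ over $\K$ gives an $L$-basis of $\Om^1_{L/\K}$; see e.g.\ \cite[Section 26]{Ma}. Under the finite-generation assumption, this forces $\opn{tr.deg}_{\K} \K((t)) \leq n$, so it suffices to show that $\opn{tr.deg}_{\K} \K((t))$ is infinite.

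The main step, and the main obstacle, is establishing this infinitude. I would do so by exhibiting an explicit infinite family of algebraically independent lacunary power series in $\K[[t]]$, for instance of the form $f_j(t) := \sum_{n \geq 0} t^{\al_n^{(j)}}$, where the exponent sequences $\{ \al_n^{(j)} \}_n$ grow superpolynomially (e.g.\ like factorials) and are sufficiently incommensurable across different indices $j$. A classical Liouville-gap truncation argument then shows that any polynomial relation $P(f_{j_1}, \ldots, f_{j_k}) = 0$ over $\K(t)$ must be trivial: evaluating $P$ on truncations $f_{j_i}^{\leq N}$ yields an element of $\K[t]$ that is both of bounded $t$-degree in terms of $\deg P$ and $N$, and also forced to vanish modulo $t^{M(N)}$ for some exponent $M(N)$ growing much faster than any such bound. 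This compels the identity to hold in $\K(t)[y_1, \ldots, y_k]$, whence $P = 0$.

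Combining the three steps gives the desired contradiction: the existence of infinitely many algebraically independent elements of $\K((t))$ over $\K$ rules out any finite generation of $\Om^1_{B/\K}$ as a $B$-module. The delicate part is the third step, since verifying algebraic independence of lacunary series over an arbitrary (possibly uncountable) field $\K$ requires careful book-keeping of both gap sizes and degrees of hypothetical polynomial relations; a cardinality shortcut works only when $|\K| < 2^{\aleph_0}$, so the general case really does need a gap argument.
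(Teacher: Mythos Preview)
Your proposal is correct and follows essentially the same route as the paper: pass to the fraction field $L = \K((t))$, use the localization isomorphism $L \ot_B \Om^1_{B/\K} \cong \Om^1_{L/\K}$, invoke the characteristic-zero identity $\dim_L \Om^1_{L/\K} = \opn{tr.deg}_{\K} L$, and conclude by showing this transcendence degree is infinite. The only difference is in the last step: the paper simply cites \cite[Lemma 1]{MS} for the general (possibly uncountable) base field, noting as you do that the countable case is elementary, whereas you sketch a direct lacunary-series gap argument; your sketch is plausible but, as you yourself flag, the book-keeping for arbitrary $\K$ is genuinely delicate, so in a final write-up you would either need to carry it out carefully or fall back on the same citation.
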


\begin{proof}
Let $L := \K((t))$ be the field of Laurent series, which is the field of 
fractions of the ring $B$. By the localization property of K\"ahler 
differentials, there is an isomorphism 
\[ L \ot_{B} \Om^1_{B / \K} \cong \Om^1_{L / \K} \]
of $L$-modules. If $\Om^1_{B / \K}$ were finitely generated as $B$-module, 
then $\Om^1_{L / \K}$ would be finitely generated as $L$-module. We will prove 
that this is false.

Because we are in characteristic $0$, the rank of 
$\Om^1_{L / \K}$ as an $L$-module equals the transcendence degree of $L$ over 
$\K$. But it is known that this transcendence degree is infinite. When the 
base field $\K$ is countable this is an easy exercise; and the 
general case was proved in \cite[Lemma 1]{MS}. 
\end{proof}

\begin{thm} \label{thm:205}
Let $\K$ be a field of characteristic $0$, let $\K[[t_1]]$ and 
$\K[[t_2]]$ be the rings of power series in the variables $t_1$ and 
$t_2$, and let $A$ be the ring
\[ A := \K[[t_1]] \ot_{\K} \K[[t_2]] . \]
Let $\a$ be the ideal in $A$ generated by $t_1$ and $t_2$,
and let $\what{A}$ be the $\a$-adic completion of $A$. 
Then\tup{:} 
\begin{enumerate}
\item The ideal $\a$ is weakly proregular. 
\item The ring $A$ is not noetherian.
\item The ring $\what{A}$ is noetherian.
\item The ring $\what{A}$ is $\a$-adically flat over $A$. 
\item The ring $\what{A}$ is not flat over $A$. 
\end{enumerate}
\end{thm}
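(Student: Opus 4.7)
The plan splits along the five parts, with (5) as the main obstacle.

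For (1), (3) and (4) I would argue directly from general results. Because $\K[[t_2]]$ is $\K$-flat, multiplication by $t_1^k$ is injective on $A=\K[[t_1]]\ot_{\K}\K[[t_2]]$, and the quotient $A/t_1^k A\cong(\K[[t_1]]/t_1^k)\ot_{\K}\K[[t_2]]$ still has $t_2^k$ as a non-zero-divisor (since $\K[[t_1]]/t_1^k$ is a finite-dimensional, hence flat, $\K$-module). So $(t_1^k,t_2^k)$ is a regular sequence for every $k\ge 1$; the Koszul cohomology $\opn{H}^i(\opn{K}(A;t_1^k,t_2^k))$ vanishes for every $i<0$, and weak proregularity of $\a=(t_1,t_2)$ is immediate. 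For (3): $A/\a^{k+1}$ has $\K$-basis the monomials $t_1^i t_2^j$ with $i+j\le k$, so $\what{A}\cong\K[[t_1,t_2]]$, which is noetherian. Part (4) follows from Theorem \ref{thm:200}, using the weak proregularity from (1), applied to the trivially flat $A$-module $M=A$.

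For (2) and (5) I would use the identification $A\cong B\ot_{\K}B$ with $B:=\K[[t]]$, via $t\ot 1\leftrightarrow t_1$ and $1\ot t\leftrightarrow t_2$. Let $\mu:A\to B$ be multiplication (so $t_1,t_2\mapsto t$) and set $I:=\opn{Ker}(\mu)$. The standard identification gives $I/I^2\cong\Omega^1_{B/\K}$ as $B=A/I$-modules, and by Lemma \ref{lem:300} this is not finitely generated over $B$. Part (2) is then immediate: if $A$ were noetherian, $I$ would be a finitely generated ideal of $A$, so $I/I^2$ would be finitely generated over $B$, contradicting Lemma \ref{lem:300}.

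The main obstacle is (5), which I would prove by contradiction, assuming $\what{A}$ is flat over $A$. The key preliminary step is the pair of identities $I\cdot\what{A}=(t_1-t_2)\what{A}$ and $I^2\cdot\what{A}=(t_1-t_2)^2\what{A}$. The inclusions $\supseteq$ come from $t_1-t_2\in I$; the inclusions $\subseteq$ follow because every $f\in I$ lies in the kernel of the composed map $\what{A}\to\K[[t]]$ with $t_1,t_2\mapsto t$, and this kernel is $(t_1-t_2)\what{A}$ inside $\what{A}\cong\K[[t_1,t_2]]$. Now tensor the short exact sequence $0\to I^2\to I\to I/I^2\to 0$ over $A$ with $\what{A}$: flatness preserves exactness and identifies $I^n\ot_A\what{A}$ with its image $I^n\what{A}\subseteq\what{A}$, giving $(I/I^2)\ot_A\what{A}\cong(t_1-t_2)\what{A}/(t_1-t_2)^2\what{A}$. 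The right-hand side is free of rank one over $\what{A}/(t_1-t_2)\what{A}\cong\K[[t]]=B$, because $t_1-t_2$ is a non-zero-divisor in $\what{A}$. On the other hand, $I/I^2$ is already a $B$-module and $\what{A}/I\what{A}\cong B$, so $(I/I^2)\ot_A\what{A}\cong I/I^2$. Flatness would therefore force $\Omega^1_{B/\K}\cong B$ as $B$-modules, contradicting Lemma \ref{lem:300}.
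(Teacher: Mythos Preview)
Your proof is correct, and parts (2), (3), (4) match the paper's argument almost verbatim. For (1) you give a direct verification that $(t_1^k,t_2^k)$ is a regular sequence, whereas the paper argues by flat base change from the noetherian ring $\K[t_1,t_2]$; both are short and your version is perhaps more transparent.

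For (5) both arguments rest on the same core idea: identify $A\cong B\ot_{\K}B$, take $I=\opn{Ker}(\mu)$, use $I/I^2\cong\Omega^1_{B/\K}$, and invoke Lemma~\ref{lem:300}. The execution differs. The paper tensors the sequence $0\to I\to A\to B\to 0$ with $\what{A}$; if exact, then $\what{A}\ot_A I$ embeds as an ideal of the noetherian ring $\what{A}$, hence is finitely generated, and the canonical surjection $\what{A}\ot_A I\twoheadrightarrow I/I^2$ then forces $\Omega^1_{B/\K}$ to be finitely generated over $B$. Your version instead computes $I\what{A}=(t_1-t_2)\what{A}$ and $I^2\what{A}=(t_1-t_2)^2\what{A}$ explicitly, tensors $0\to I^2\to I\to I/I^2\to 0$, and obtains the sharper contradiction $\Omega^1_{B/\K}\cong B$. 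Your route needs a bit more computation (the identification of $I^n\what{A}$) but avoids appealing to noetherianness of $\what{A}$ at that step and yields a stronger false consequence; the paper's route is slightly slicker but only concludes finite generation. Both are valid and close in spirit.
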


\begin{proof}
(1) This was proved in \cite[Example 4.35]{PSY1}. The idea is this. 
Let $A' := \K[t_1, t_2]$, and let $\a' \subseteq A'$ be the ideal generated by 
the variables. Since $A'$ is noetherian, the ideal $\a'$ is weakly proregular. 
Now $A' \to A$ is flat, and $\a = A \cd \a'$, and this easily implies that $\a$ 
is weakly proregular.

\medskip \noindent
(2) This too was proved in \cite[Example 4.35]{PSY1}. An enhancement of that 
proof is now used to prove item (5). Note that if $A$ were noetherian, then the 
ideal $I$ introduced below would have to be finitely generated. 

\medskip \noindent
(3) Since $\what{A} = \K[[t_1, t_2]]$, it is a noetherian ring. 

\medskip \noindent
(4) Because $\a$ is weakly proregular, this is a special case of Theorem 
\ref{thm:200}. 

\medskip \noindent 
(5) This is the new and challenging part of the theorem. 
To prove that the ring homomorphism $\tau : A \to \what{A}$ is not flat, 
we shall exhibit an exact sequence of $A$-modules that does not remain 
exact after base change to $\what{A}$.

Consider the surjective ring homomorphism 
\[ f : A = \K[[t_1]] \ot_{\K} \K[[t_2]] \to B = \K[[t]] \]
defined by $f(t_l) := t$ for $l = 1, 2$. 
Let $I := \opn{Ker}(f) \subseteq A$. 
There is an exact sequence of $A$-modules 
\begin{equation} \label{eqn:5}
 0 \to I \to A \xar{\, f \, } B \to 0 .
\end{equation}
Applying $\what{A} \ot_A -$ to this sequence we get a sequence of 
$\what{A}$-modules 
\begin{equation} \label{eqn:1}
0 \to \what{A} \ot_A I \to \what{A} \xar{} \what{A} \ot_A B \to 0 \, .
\end{equation}
We will prove that the sequence (\ref{eqn:1}) is not exact. 
(Actually $\what{A} \ot_A B \cong B$, but we won't need this fact.)

For the sake of contradiction, let us assume that the sequence (\ref{eqn:1}) is 
exact. Then \lb $\what{A} \ot_A I$ is an ideal in the noetherian 
ring $\what{A}$, and hence it is finitely generated as an $\what{A}$-module.

There is another surjective ring homomorphism
\[ \what{f} : \what{A} = \K[[t_1, t_2]] \to B = \K[[t]] \]
defined by $\what{f}(t_l) := t$ for $l = 1, 2$. 
Note that $\what{f} \circ \tau = f$ as ring homomorphisms $A \to B$. 

Define the $B$-module $N := I / I^2$.
We view $N$ as an $\what{A}$-module through the ring homomorphism
$\what{f} : \what{A} \to B$. Consider the surjective $\what{A}$-module 
homomorphism 
\[ \psi : \what{A} \ot_A I \to N \]
that extends the canonical $A$-module surjection $I \to I / I^2 = N$. 
Since $\what{A} \ot_A I$  is finitely generated as an $\what{A}$-module, it 
follows that $N$ is finitely generated as a $B$-module. 

Finally, the $B$-module $N$ is isomorphic to the module of K\"ahler 
differential $1$-forms $\Om^1_{B / \K}$. But we already proved, in Lemma 
\ref{lem:300}, that $\Om^1_{B / \K}$ is not a finitely generated $B$-module. 
\end{proof}

\begin{rem} \label{rem:205}
If $\K$ is a field of characteristic $p > 0$, then the $B$-module 
$\Om^1_{B / \K}$ is free of rank $1$, with basis the form $\d(t)$. Thus the 
proof of Theorem \ref{thm:205} does not work in this situation. We do not know 
whether the statements themselves are true...  
\end{rem}


\end{document}